\newtheorem{theorem}{Theorem}[section]
\newtheorem*{theorem*}{Theorem}
\newtheorem{proposition}[theorem]{Proposition}
\newtheorem{lemma}[theorem]{Lemma}
\newtheorem{corollary}[theorem]{Corollary}
\newtheorem{claim}[theorem]{Claim}
\theoremstyle{definition}
\newtheorem{definition}[theorem]{Definition}
\theoremstyle{remark}
\newtheorem{remark}[theorem]{Remark}
\newtheorem{example}[theorem]{Example}
\newcommand{\I}{\mathcal{I}}
\begin{document}

\title{Matchings in Matroids over Abelian Groups, III}

\author[M. Aliabadi]{Mohsen Aliabadi}
\address{Department of Mathematics, Clayton State University, Morrow, Georgia, USA}
\email{maliabadi@clayton.edu}

\author[E. Krop]{Elliot Krop}
\address{Department of Mathematics, Clayton State University, Morrow, Georgia, USA}
\email{ElliotKrop@clayton.edu}

\dedicatory{Dedicated to Melvyn Nathanson and Carl Pomerance on their 80th birthdays}

\begin{abstract}
This paper develops matroidal analogues of classical results on matchings in abelian groups. By embedding matroid ground sets in an abelian group, we introduce base matchings between matroid bases, recover the group-theoretic setting in the uniform matroid case, and derive structural and combinatorial criteria for their existence. Our main focus is on paving matroids. We prove self-matchability for paving matroids, extend asymmetric matchability results using the hyperplane-nullity parameter, and show that stressed hyperplanes provide a natural route to matchability through relaxation.
\end{abstract}

\maketitle

\section{Introduction}

\subsection{Overview}

In an abelian group \(G\), a \emph{matching} is a bijection \(f\colon A\to B\) between finite subsets \(A,B\subseteq G\) such that $a+f(a)\notin A$ for all $a\in A$. We say that \(G\) has the \emph{matching property} if, for every pair of finite subsets \(A,B\subseteq G\) with \(|A|=|B|\) and \(0\notin B\), there exists such a matching. A geometric model for a class of bipartite graphs was introduced in \cite{Fan}. In that work, a special kind of perfect matching, called an acyclic matching, was defined and shown to exist for a subclass of these graphs by geometric methods. The existence of acyclic matchings implies the nonvanishing of the determinant of certain weighted biadjacency matrices.
This matching framework was later applied to a question posed by E.~K. Wakeford in 1920 \cite{Wakeford}, concerning which sets of monomials can be eliminated from a generic homogeneous polynomial by linear changes of variables. In a special case related to canonical forms of symmetric tensors, Fan and Losonczy reduced Wakeford's problem to the existence of acyclic matchings in \(\mathbb{Z}^n\). This property was established for \(\mathbb{Z}^n\) by Alon, Fan, Kleitman, and Losonczy~\cite{Alon}, further developed in~\cite{Aliabadi0, Taylor 1}, and later completely characterized for all abelian groups in~\cite{Taylor}.

Matchings were extended to the abelian group setting in \cite{Losonczy} and to non-abelian groups in \cite{Eliahou1}, with further developments, particularly on enumerative aspects, presented in \cite{Hamidoune}. A linear formulation was later introduced in \cite{Eliahou2}, and a matroidal analogue was proposed in \cite{Zerbib0}. Additional progress on matchable matroids appears in \cite{Rita}. In particular, \cite{Zerbib0,Rita} investigated matchability for the following four classes of matroids:
\begin{itemize}
    \item sparse paving matroids,
    \item transversal matroids,
    \item panhandle matroids,
    \item Schubert matroids.
\end{itemize}
In this paper, we continue this line of research by studying the matchability of paving matroids. Following the terminology introduced in \cite{Losonczy}, we present the definition of matchings in abelian groups.

\noindent\textit{Matchings in abelian groups.}  Throughout, we use the notation \([n] := \{1, \dots, n\}\) for any positive integer \(n\). 
We also write \((G,+)\) for an abelian group with neutral element \(0\).
 Let $p(G)$ represent the smallest cardinality of a nontrivial (i.e., non-zero) finite subgroup of $G$. If $G$ has no nontrivial finite subgroup, we make the convention $p(G)=\infty$. Let $A$ and $B$ be finite subsets of $G$ with equal cardinality, such that $0 \notin B$. A \emph{matching} from $A$ to $B$ is a bijection $f: A \to B$ satisfying the condition that $a + f(a) \notin A$ for every $a \in A$. The conditions $|A| = |B|$ and $0 \notin B$ are necessary for the existence of such a bijection. A matching is called \emph{symmetric} if $A = B$, and \emph{asymmetric} otherwise. If there exists a matching from $A$ to $B$, then $A$ is said to be \emph{matched} or \emph{matchable} to $B$. An abelian group $G$ is said to satisfy the \emph{matching property} if for every pair of finite subsets $A, B \subseteq G$ with $|A| = |B|$ and $0 \notin B$, the set $A$ is matched to $B$.

Characterizations of abelian groups satisfying the matching property, as well as necessary and sufficient conditions for the existence of symmetric matchings, were obtained by Losonczy.

\begin{theorem}[\cite{Losonczy}]\label{symmetric matching}
Let $G$ be an abelian group, and let $A$ be a nonempty finite subset of $G$. Then there exists a matching from $A$ to itself if and only if $0 \notin A$.
\end{theorem}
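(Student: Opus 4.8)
The forward implication is essentially definitional: a matching from $A$ to $A$ requires $0\notin A$, since for any bijection $f\colon A\to A$ the element $a_0:=f^{-1}(0)$ would satisfy $a_0+f(a_0)=a_0\in A$. The content is the reverse implication, and the plan is to derive it from Hall's marriage theorem. Form the bipartite graph $\Gamma$ whose two sides are copies of $A$, joining $a$ on the left to $b$ on the right exactly when $a+b\notin A$; then a matching from $A$ to itself is precisely a perfect matching of $\Gamma$. Since $b$ is a non-neighbour of a set $S\subseteq A$ iff $S+b\subseteq A$, the non-neighbourhood of $S$ equals $T_S:=\{b\in A:S+b\subseteq A\}$, and Hall's condition $|N_\Gamma(S)|\ge|S|$ becomes the additive inequality
\[
|T_S|+|S|\le|A|\qquad\text{for every nonempty }S\subseteq A,
\]
which is what remains to be proved.

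The tool for this is Kneser's theorem. Set $S':=S\cup\{0\}$; since $0\notin A\supseteq S$ we have $|S'|=|S|+1$ and $T_S=\{b:b+S'\subseteq A\}$, so $T_S+S'\subseteq A$. We may assume $T_S\neq\emptyset$ (otherwise the inequality is clear, as $S\subseteq A$), and Kneser's theorem gives $|T_S+S'|\ge|T_S+H|+|S'+H|-|H|$, where $H:=\mathrm{Stab}(T_S+S')$. If $H$ is trivial this already suffices, since then $|A|\ge|T_S+S'|\ge|T_S|+|S'|-1=|T_S|+|S|$.

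The case $|H|\ge 2$ is where the hypothesis $0\notin A$ is essential, and is the step I expect to be the main obstacle. Because $T_S+S'\subseteq A$ and $0\notin A$, the ($H$-periodic) set $T_S+S'$ is disjoint from the coset $H$. If $S\cap H=\emptyset$ then $S+H$ likewise misses the coset $H$, so $S'+H=(S+H)\sqcup H$ has size $|S+H|+|H|\ge|S|+|H|$, and Kneser's bound yields $|A|\ge|T_S+H|+|S'+H|-|H|\ge|T_S|+|S|$. If $S\cap H\neq\emptyset$ then $S'+H=S+H$, while $S\cap H\subseteq A\setminus(T_S+S')$, so $|A|\ge|T_S+S'|+|S\cap H|\ge|T_S+H|+|S+H|-|H|+|S\cap H|$; writing $c:=|S+H|/|H|$ for the number of $H$-cosets that $S$ meets, and noting that $S$ occupies at most $|H|-1$ points of the coset $H$ (as $0\notin S$), one gets $|S|\le|S\cap H|+|H|(c-1)=|S\cap H|+|S+H|-|H|$, whence $|A|\ge|T_S|+|S|$ again. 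In every case Hall's condition is verified, and the desired matching exists.

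The crux is thus the non-trivial-stabiliser case of Kneser's theorem: the $-|H|$ term must be absorbed, and the only available resource is the single hypothesis $0\notin A$, used twice — once to keep $0$, and with it the coset $H$, out of the sumset $T_S+S'$, and once through $0\notin S$ to cap $|S\cap H|$ at $|H|-1$.
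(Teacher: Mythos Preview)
Your proof is correct. The forward direction is immediate, and for the converse you correctly reduce to Hall's condition via the bipartite graph and then verify the additive inequality $|T_S|+|S|\le|A|$ using Kneser's theorem with a case split on the stabiliser $H$. One small redundancy: in the subcase $S\cap H\neq\emptyset$, the observation that $|S\cap H|\le|H|-1$ (from $0\notin S$) is not actually needed for the bound $|S|\le|S\cap H|+(c-1)|H|$; that bound follows simply because $S$ is partitioned by the $c$ cosets it meets, one piece being $S\cap H$ and each of the remaining $c-1$ pieces having size at most $|H|$. The argument stands regardless, so this is cosmetic rather than a gap.

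Note, however, that the present paper does not itself prove this theorem: it is quoted from Losonczy's 1998 paper and used as a black box in the proofs of Theorems~\ref{sparse sym main} and~\ref{paving sym}. There is therefore no in-paper proof to compare against. Your reduction to Hall's marriage theorem together with a Kneser-type sumset estimate is the standard route and is essentially the approach taken in the original source.
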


\begin{theorem}[\cite{Losonczy}]\label{matching property}
An abelian group $G$ satisfies the matching property if and only if $G$ is torsion-free or cyclic of prime order.
\end{theorem}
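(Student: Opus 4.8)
The statement is an equivalence, and I would prove the two implications by quite different means; the converse (sufficiency of the hypothesis) is the one that needs real work.

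\textbf{Sufficiency.} Suppose $G$ is torsion-free or cyclic of prime order $p$, and fix finite $A,B\subseteq G$ with $|A|=|B|$ and $0\notin B$. We may assume $A\neq\emptyset$, and we first note $|A|<|G|$: automatic if $G$ is infinite, while if $G\cong\mathbb{Z}/p\mathbb{Z}$ then $|A|=|G|$ would force $A=B=G\ni 0$, contradicting $0\notin B$. Form the bipartite graph on classes $A$ and $B$ with $a\in A$ joined to $b\in B$ iff $a+b\notin A$; a matching from $A$ to $B$ is exactly a perfect matching of this graph, so by Hall's theorem it suffices to verify $|N(S)|\ge|S|$ for every $S\subseteq A$. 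Put $T:=\{b\in B:\ S+b\subseteq A\}$, so $N(S)=B\setminus T$ and $|N(S)|=|A|-|T|$, and the Hall inequality becomes $|T|\le|A|-|S|$. This is clear for $S=\emptyset$ (then $T=B$). For $S\neq\emptyset$, set $T':=T\cup\{0\}$; since $0\notin B\supseteq T$ we get $|T'|=|T|+1$, and $S+T'=(S+T)\cup S\subseteq A$ because $S\subseteq A$ and $S+T\subseteq A$ by definition of $T$. Hence $|S+T'|\le|A|$. Finally, invoke a lower bound on $|S+T'|$: in the torsion-free case $G$ carries a translation-invariant total order, so $|S+T'|\ge|S|+|T'|-1=|S|+|T|$; in the case $G\cong\mathbb{Z}/p\mathbb{Z}$, Cauchy--Davenport gives $|S+T'|\ge\min\bigl(p,\,|S|+|T|\bigr)$, and since $|S+T'|\le|A|<p$ this minimum must equal $|S|+|T|$. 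Either way $|A|\ge|S|+|T|$, i.e.\ $|T|\le|A|-|S|$, as required.

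\textbf{Necessity.} Suppose $G$ is neither torsion-free nor cyclic of prime order. Since $G$ has a nonzero torsion element, it contains a subgroup $H$ of prime order $q$ (take $(n/q)x$ with $x$ of finite order $n>1$ and $q\mid n$ prime); and since $G$ is not itself cyclic of prime order, $H\neq G$, so fix $g\in G\setminus H$. Put $A:=H$ and $B:=(H\setminus\{0\})\cup\{g\}$, so $|A|=|B|=q$ and $0\notin B$. If $f\colon A\to B$ were a matching, then, being a bijection, it sends exactly one element of $A$ to $g$; as $q\ge2$, some $a\in A$ has $f(a)\in H\setminus\{0\}$, whence $a+f(a)\in H=A$, contradicting the matching condition. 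So $G$ fails the matching property.

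\textbf{Expected obstacle.} The tempting induction ``delete a compatible pair $(a_0,b_0)$ with $a_0+b_0\notin A$ and recurse'' is not sound, because a matching of $A\setminus\{a_0\}$ to $B\setminus\{b_0\}$ is only constrained to avoid $A\setminus\{a_0\}$, not $A$; this is why I route sufficiency through Hall's theorem. Within that route, the only step needing the right idea is the padding $T\mapsto T\cup\{0\}$, which uses both standing hypotheses ($0\notin B$, so the padding strictly enlarges $T$; and $S\subseteq A$, so $S+\{0\}$ remains inside $A$) to sharpen the sumset estimate from $|T|\le|A|-|S|+1$ to the exact $|T|\le|A|-|S|$ that Hall demands. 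The rest — the reduction to a bipartite matching, the appeals to ordered-group additivity and Cauchy--Davenport, and the explicit counterexample — is short once this observation is in hand.
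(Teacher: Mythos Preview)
The paper does not supply a proof of this theorem; it is quoted from Losonczy's 1998 paper and used as background, so there is no in-paper argument to compare against.

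Your proof is correct. The sufficiency direction is the standard route: reduce to Hall's marriage theorem on the bipartite ``$a+b\notin A$'' graph, rewrite the Hall inequality as $|T|\le|A|-|S|$ for $T=\{b\in B:S+b\subseteq A\}$, pad $T$ by $0$ (legitimate since $0\notin B$), and then bound $|S+T'|$ from below by $|S|+|T|$ using either the ordered-group sumset inequality (torsion-free case) or Cauchy--Davenport ($\mathbb{Z}/p\mathbb{Z}$ case). Your check that $|A|<|G|$ in the prime case is exactly what is needed to rule out the degenerate minimum in Cauchy--Davenport. The necessity direction is also fine: your explicit pair $A=H$, $B=(H\setminus\{0\})\cup\{g\}$ with $H$ a proper prime-order subgroup and $g\notin H$ is the standard obstruction, and your verification that any bijection must send at least one element into $H\setminus\{0\}$ (hence into $A$ after addition) is clean.

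One minor remark: in the sufficiency direction you could, if you wished, unify the two cases by appealing to Kneser's theorem (as the present paper does elsewhere) rather than splitting into ordered-group and Cauchy--Davenport arguments; but your split is perfectly self-contained and arguably more elementary.
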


Sufficient conditions for the existence of matchings between small enough finite sets in abelian groups were provided in \cite{Aliabadi2}.

\begin{theorem}[\cite{Aliabadi2}]\label{matchable sets, p(G)}
Let $G$ be an abelian group, and let $A$ and $B$ be finite subsets of $G$ with $|A| = |B| < p(G)$ and $0 \notin B$. Then there exists a matching from $A$ to $B$.
\end{theorem}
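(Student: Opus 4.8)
The plan is to reduce the statement to Hall's marriage theorem for a natural bipartite graph and then to verify Hall's condition using Kneser's addition theorem. Write $n=|A|=|B|$ and assume $n\geq 1$. Let $\Gamma$ be the bipartite graph with parts $A$ and $B$ in which $a\in A$ is joined to $b\in B$ exactly when $a+b\notin A$. A matching from $A$ to $B$ in the sense of the theorem is precisely a perfect matching of $\Gamma$ — which makes sense since $|A|=|B|$ — so it suffices to check that $|N_\Gamma(S)|\geq|S|$ for every $S\subseteq A$, and we may assume $S\neq\emptyset$.

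First I would rephrase the deficiency in additive terms. Set $S'=\{g\in G:\ S+g\subseteq A\}$. Then $B\setminus N_\Gamma(S)=\{b\in B:\ \forall a\in S,\ a+b\in A\}=B\cap S'$, and since $S\subseteq A$ we have $0\in S'$, while $0\notin B$ by hypothesis. Hence $|B\cap S'|\leq|S'|-1$, and therefore $|N_\Gamma(S)|\geq n-|S'|+1$; so Hall's condition will follow once we show $|S'|\leq n-|S|+1$. The second step is to bound $S'$: by construction $S+S'\subseteq A$, so $|S+S'|\leq n$, and we feed $S+S'$ into Kneser's theorem. Let $H$ be the stabiliser of $S+S'$; it is a finite subgroup of $G$ because $S+S'$ is finite and nonempty. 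If $H\neq\{0\}$, then $S+S'$ is a nonempty union of $H$-cosets, so $n\geq|S+S'|\geq|H|\geq p(G)>n$, a contradiction; hence $H=\{0\}$ and Kneser's inequality reduces to $|S+S'|\geq|S|+|S'|-1$. Comparing with $|S+S'|\leq n$ yields $|S'|\leq n-|S|+1$, which is exactly what was needed, and Hall's theorem then produces the desired matching.

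The only place the hypothesis $n<p(G)$ enters is the dichotomy in the last step: over a general abelian group the Cauchy–Davenport-type bound $|X+Y|\geq|X|+|Y|-1$ need not hold, and what rescues the argument is precisely Kneser's theorem together with the observation that a nontrivial stabiliser of $S+S'$ would force $|A|=n\geq p(G)$. I expect this to be the main (indeed essentially the only) conceptual obstacle; the remaining ingredients — Hall's theorem, the translation to $S'$, and the trivial base/edge cases such as $S=\emptyset$ or $n=1$ — are routine. One should take a little care to cite Kneser's theorem in a form that also covers the symmetric situation $A=B$, but no separate argument is needed there.
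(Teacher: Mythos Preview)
Your argument is correct. The reduction to Hall's marriage theorem via the bipartite graph $\Gamma$, the reformulation of the non-neighbourhood as $B\cap S'$ with $S'=\{g:S+g\subseteq A\}$, and the use of Kneser's theorem to force the stabiliser of $S+S'$ to be trivial (since $|S+S'|\leq|A|=n<p(G)$) are all sound. The step $0\in S'\setminus B$ giving $|B\cap S'|\leq|S'|-1$ is exactly where the hypothesis $0\notin B$ is used, and the finiteness of $S'$ (needed to apply Kneser) follows from $S'\subseteq -a+A$ for any fixed $a\in S$.

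As for the comparison: the present paper does not prove this statement at all; it is quoted as Theorem~1.3 from \cite{Aliabadi2} and used as a black box in the proofs of Theorems~\ref{Asy pav} and~\ref{Asy paving}. So there is no proof in the paper to compare against. Your Hall--Kneser argument is the standard route to this result and is essentially the one given in the cited source.
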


\noindent\textit{A connection to matchings in graphs.} While there are meaningful connections between the notion of matchings explored in this paper and the classical matching theory in graphs introduced by Philip Hall in 1935, the two concepts are fundamentally distinct. In particular, in order to address a linear algebra problem posed by Wakeford~\cite{Wakeford}, Fan~\cite{Fan} constructed a bipartite graph
\[
\mathcal{G}=(V(\mathcal{G}),E(\mathcal{G}))
\]
from two sets \(A,B\subseteq \mathbb{Z}^n\). To each element \(a\in A\) one associates a symbol \(x_a\), and to each element \(b\in B\) a symbol \(y_b\). Let
\[
X=\{x_a \mid a\in A\}
\qquad\text{and}\qquad
Y=\{y_b \mid b\in B\}.
\]
The graph \(\mathcal{G}\) is then defined to have vertex set
\[
V(\mathcal{G})=X\cup Y,
\]
where an edge joins \(x_a\in X\) to \(y_b\in Y\) if and only if
\[
a+b\notin A.
\]

In graph theory, a \textit{matching} is a set of edges with no shared endpoints, and a \textit{perfect matching} is one that covers all vertices. In the bipartite graph with bipartition sets $V(\mathcal{G})=X\cup Y$, indexed by $A$ and $B$, perfect matchings correspond to bijections \( f: A \to B \) such that \( a + f(a) \notin A \) for every \( a \in A \). Wakeford's problem \cite{Wakeford} thus becomes equivalent to determining the existence of such a bijection, referred to as an acyclic matching from \( A \) to \( B \).

It is worth noting that, in the classical graph-theoretic setting, a perfect matching merely requires that every vertex is incident to exactly one edge, without reference to a bijection \( f: A \to B \) or to the additional condition \( a + f(a) \notin A \). By contrast, the notion of matchings in abelian groups incorporates this extra structure in order to capture the linear-algebraic framework underlying Wakeford's problem.

\subsection{Matching in Matroids}\label{Matching in matroids}
We begin with the definition of a matroid. The reader is referred to \cite{Oxley} for all undefined terms. A {\it matroid} $M$ is a pair $(E, \mathcal{I})$ where $E=E(M)$ is a finite  {\em ground set}  and $\mathcal{I}$ is a family of subsets of $E$, called {\em independent sets},  satisfying the following  conditions:
\begin{itemize}
\item $\emptyset \in \I$.
\item If $X\in \I$ and $Y\subseteq X$ then $Y\in \I$.
\item The {\em augmentation property}: If $X, Y \in \I$  and $|X|>|Y|$ then there exists $x\in X\setminus Y$ so that $Y\cup \{x\} \in \I$. 
\end{itemize}

Let $M=(E, \mathcal{I})$ be a matroid. 
The {{\em rank}} of a subset $X\subseteq E$ is given by
$$r_M(X)=
r(X)=max\{|X\cap I|: I\in \mathcal{I}\}.
$$

The rank of a matroid $M$, denoted by $r(M)$, is defined to be $r_M(E(M))$.
A set $X\subseteq E$ is called {{\em dependent}} if it is not independent.
A maximal independent set is called a {{\em basis}}. It follows from the augmentation property that every two bases have the same size. A minimal dependent set is called a {\it{circuit}}. An element that forms a single-element circuit of $M$ is called a {\it{loop}}. A matroid is called {\emph{loopless}} if it does not have a loop.

\bigskip
Given a matroid $M=(E,\I)$, the {{\em dual matroid}} $M^*=(E, \mathcal{I}^*)$ is defined so that the
bases in $M^*$ are exactly the complements of the bases in $M$. A matroid $M=(E,\I)$ of rank $n$ is said to be  a {{\em paving matroid}} if every $(n-1)$-subset of $E$ is independent. If $M$ and $M^*$ are both paving matroids, then $M$ is called a {\em sparse paving matroid}.\\
A {{\em flat}} in  $M$ is a set $F \subseteq E$ with the property that adjoining any new element to $F$ strictly increases its rank. A flat of rank $r(M)-1$ is called a {{\em hyperplane}}. A set $H\subseteq E(M)$ is called a {\it {circuit-hyperplane}} of $M$ if it is at the same time a circuit and a hyperplane.
For any $X\subseteq E$ the difference $|X|-r(X)$ is called the {\it{nullity}} of $X$ and is denoted by ${\it{null}}(X)$. Let us denote the set of all hyperplanes of $M$ by $\mathcal{H}_M$. Then the {\it{hyperplane nullity}} of $M$ is denoted by ${\it{null}}(\mathcal{H}_M)$ and defined as 
$$
{\it{null}}(\mathcal{H}_M)=\max\{{\it{null}}(H): H\in \mathcal{H}_M\}.
$$
For integers $m$ and $n$ with $0 \leq n \leq m$, the \emph{uniform matroid} $U_{n,m}$ is defined on a ground set $E$ of size $m$ such that a subset $A \subseteq E$ is independent if and only if $|A| \leq n$. Equivalently, the bases of $U_{n,m}$ are exactly the subsets of $E$ of size $n$. Note that uniform matroids are a very special class of sparse paving matroids. A uniform matroid $U_{n,m}$ is called a {\it{free matroid}} if $m=n$. 

\bigskip
If $X\subseteq E$, the {\em closure} of $X$ is denoted by ${\it{cl}}(X)$ and defined as follows:
$$
{\it{cl}}(X)=\{x\in E: r(X)=r(X\cup \{x\})\}.
$$
The closure operator satisfies the following properties:
\begin{itemize}
    \item For all subsets $X$ of $E$, $X\subseteq {\it{cl}}(X)$.
    \item If $X$ and $Y$ are subsets of $E$ with $X\subseteq Y$, then ${\it{cl}}(X)\subseteq {\it{cl}}(Y)$.
    \item If $r(X)=r(M)-1$, then ${\it{cl}}(X)$ is a hyperplane of $M$.
\end{itemize}
\bigskip
 Following \cite{Zerbib0} we introduce the definition of matchings in matroids over an abelian group. 
 We may assume that all matroids are loopless (see Remark 5.1 in \cite{Zerbib0}.)
We say that {\emph{$M=(E,\mathcal{I})$ is a matroid over $G$}} if $E$ is a subset of $G$. 

\begin{definition}\label{def, matching matroid}  Let  $(G,+)$ be an abelian group.
\begin{enumerate}
\item  Let $M$ and $N$ be two matroids over $G$ with $r(M)=r(N)=n>0$. Let $\mathcal{M}=\{a_1,\ldots,a_n\}$ and $\mathcal{N}=\{b_1,\ldots,b_n\}$ be ordered bases of $M$ and $N$, respectively. We say $\mathcal{M}$ is {{\em matched}} to $\mathcal{N}$ if $a_i+b_i\notin E(M)$, for all $i\in [n]$. 
    
    \item We say that $M$ is {{\em matched}}  to $N$ if for every basis $\mathcal{M}$ of $M$ there exists a basis $\mathcal{N}$ of $N$ such that $\mathcal{M}$ is matched to $\mathcal{N}$.

 \end{enumerate}
\end{definition}

    Note that adopting the notation from Definition~\ref{def, matching matroid}, suppose \( \mathcal{M} \) is matched to \( \mathcal{N} \) as matroid bases. Then for each \( 1 \leq i \leq n \), we have \( a_i + b_i \notin E(M) \), and in particular \( a_i + b_i \notin \mathcal{M} \). This implies that the map \( a_i \mapsto b_i \) defines a matching in the group-theoretic sense between the subsets \( \mathcal{M} \) and \( \mathcal{N} \) of \( G \). In this way, the definition of matchings in matroids is compatible with the classical notion of matchings in abelian groups.

\bigskip

\noindent\textit{Compatibility of matchings in matroids with the vector space setting.}
Definition~\ref{def, matching matroid} is consistent with the notion of matchability in vector spaces over field extensions, as introduced by Eliahou and Lecouvey~\cite{Eliahou2}. Let \( K \subseteq F \) be a field extension, and let \( A, B \subseteq F \) be \( n \)-dimensional \( K \)-vector subspaces. Suppose \( \mathcal{A} = \{a_1, \dots, a_n\} \) and \( \mathcal{B} = \{b_1, \dots, b_n\} \) are ordered bases of \( A \) and \( B \), respectively. Then \( \mathcal{A} \) is said to be \emph{matched} to \( \mathcal{B} \) if
\begin{align}\label{match vector}
    a_i^{-1} A \cap B \subseteq \langle b_1, \dots, \hat{b}_i, \dots, b_n \rangle \quad \text{for every } i\in [n],
\end{align}
where \( \langle b_1, \dots, \hat{b}_i, \dots, b_n \rangle \) denotes the \( K \)-vector subspace spanned by \( \mathcal{B} \setminus \{b_i\} \). The space \( A \) is said to be matched to \( B \) if every basis \( \mathcal{A} \) of \( A \) is matched to some basis \( \mathcal{B} \) of \( B \) in this way.

Now let \( G = F \setminus \{0\} \) be the multiplicative group of \( F \), and define matroids over \( G \) by setting \( M = (A \setminus \{0\}, \mathcal{I}) \) and \( N = (B \setminus \{0\}, \mathcal{I}') \), where \( \mathcal{I} \) and \( \mathcal{I}' \) are the families of linearly independent subsets of \( A \) and \( B \), respectively. Then
\[
r(M) = \dim_K A = n = \dim_K B = r(N).
\]
Suppose the basis \( \mathcal{A} \) is matched to \( \mathcal{B} \) in the vector space setting. It follows from condition~\eqref{match vector} that \( a_i b_i \notin A \) for all \( i \), and hence \( A \) is also matched to \( B \) in the matroid-theoretic sense.

\bigskip

In this paper we broaden the scope of matroid matchability in three directions. 
First, we extend the symmetric paving case by proving that any paving matroid is matched to itself under suitable size conditions (Theorem~\ref{paving sym}). 
Second, we develop an asymmetric extension in which matchability is controlled by the hyperplane-nullity parameter~$t$, generalizing the sparse paving setting (Theorem~\ref{Asy pav}). 
Finally, we show that stressed hyperplanes provide a natural route to matchability in relaxations of paving matroids (Theorem~\ref{Asy paving}). 

The point of passing from sparse paving to paving matroids is not merely to enlarge the class of matroids for which matching results are known. In sparse paving matroids (with rank $n$), the only dependent \(n\)-sets are circuit-hyperplanes, so the obstruction is essentially binary. In other words, the failure of an \(n\)-subset to be a basis is controlled by a single type of obstruction, namely circuit-hyperplanes. By contrast, in paving matroids, hyperplanes may have larger nullity, and our results show that matchability is controlled by this richer geometric parameter. In this way, the paper shows that the key mechanism governing matroid matchings is the structure of hyperplanes, rather than near-uniformity alone.

\medskip

\noindent\textit{Organization of the paper.} 
Section~\ref{prelim} reviews preliminaries on paving and sparse paving matroids, including their characterization via partitions and hyperplane nullity, as well as additive tools such as Kneser’s theorem and results on critical pairs. 
Section~\ref{main result} presents the main theorems: Subsection~\ref{sym} treats the symmetric case, Subsection~\ref{asym} develops the asymmetric theory incorporating hyperplane nullity, and Subsection~\ref{stressed} introduces stressed hyperplanes and relaxation, connecting paving matroids to uniform matroids and demonstrating how these ideas ensure broader matchability. 

\section{Preliminaries}\label{prelim}
\subsection{Notions and Results on Paving Matroids}\label{matroid general}
A {\it{$d$-partition}} of a finite set $E$ is a collection $\mathcal{S}$ of subsets of $E$, all of which are of size at least $d$, so that the intersection of any two distinct sets in $\mathcal{S}$ is of size at most $d-1$. The set $\mathcal{S} = \{E\}$ is a $d$-partition for any $d$, which we call a {\it{trivial
$d$-partition.}}
The connection between paving matroids and d-partitions is given by the following result, whose proof can be found in \cite{Oxley}.

\begin{theorem} \cite{Oxley}\label{(n-1) partitions}
If $M$ is a paving matroid of rank $n\geq 2$, then its hyperplanes form
a non-trivial $(n-1)$-partition of $E(M)$. That is, the intersection of any two distinct hyperplanes in $M$ is of size at most $n-2$.
\end{theorem}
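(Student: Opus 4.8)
The plan is to check, one clause at a time, the two conditions in the definition of an $(n-1)$-partition, together with non-triviality. The size condition is free of the paving hypothesis: any hyperplane $H$ satisfies $r(H)=n-1$, and since rank is bounded above by cardinality, $|H|\ge n-1$. Non-triviality is also quick: every hyperplane has rank $n-1<n=r(E)$, so $E\notin\mathcal{H}_M$; and since $r(M)=n\ge 2$, deleting one element from a basis produces an independent set of size $n-1$, which therefore has rank $r(M)-1$, so its closure is a hyperplane and $\mathcal{H}_M\ne\emptyset$. Hence $\mathcal{H}_M\ne\{E\}$, as required.

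The real content is the intersection bound, and this is where the paving hypothesis is used. Suppose, for contradiction, that $H_1\ne H_2$ are hyperplanes with $|H_1\cap H_2|\ge n-1$, and pick $X\subseteq H_1\cap H_2$ with $|X|=n-1$. Since $M$ is paving of rank $n$, $X$ is independent, so $r(X)=n-1$; combined with $X\subseteq H_1\cap H_2\subseteq H_1$ and monotonicity of rank, this forces $r(H_1\cap H_2)=n-1$.

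Next I would show that $H_1\cap H_2$ is itself a flat, and then that it must coincide with $H_1$ (and with $H_2$), which yields the contradiction. For the first step, note that with the definitions recalled above a set is a flat exactly when it equals its closure; since $H_1,H_2$ are flats, monotonicity of the closure gives $\mathrm{cl}(H_1\cap H_2)\subseteq\mathrm{cl}(H_1)\cap\mathrm{cl}(H_2)=H_1\cap H_2$, and the reverse inclusion is automatic, so $H_1\cap H_2$ is a flat, indeed a hyperplane since its rank is $n-1$. For the second step, if there were some $z\in H_1\setminus(H_1\cap H_2)$, then flatness of $H_1\cap H_2$ would force $r\big((H_1\cap H_2)\cup\{z\}\big)>n-1$, contradicting $(H_1\cap H_2)\cup\{z\}\subseteq H_1$ and $r(H_1)=n-1$; hence $H_1\cap H_2=H_1$, and symmetrically $H_1\cap H_2=H_2$, so $H_1=H_2$ — a contradiction. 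No genuine obstacle is anticipated here; the only step demanding a little care is the bookkeeping linking flats, closure, and rank, all of which is standard matroid theory and follows from the properties recalled in the preliminaries.
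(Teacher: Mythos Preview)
Your argument is correct in every step: the size lower bound $|H|\ge r(H)=n-1$ and non-triviality require no paving hypothesis, and your contradiction for the intersection bound is sound---once $X\subseteq H_1\cap H_2$ with $|X|=n-1$ is independent (this is exactly where paving enters), the closure reasoning forces $H_1=H_2$.

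As for comparison: the paper does not supply its own proof of this statement but simply cites Oxley's textbook. Your proof is essentially the standard one found there; the only cosmetic variation is that you route the final contradiction through showing $H_1\cap H_2$ is itself a flat of rank $n-1$, whereas one can equivalently observe directly that $\mathrm{cl}(X)$ is a hyperplane contained in both $H_1$ and $H_2$, and then invoke maximality of hyperplanes among proper flats to conclude $\mathrm{cl}(X)=H_1=H_2$. The two phrasings are interchangeable and neither buys anything the other does not.
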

The following result provides an alternative characterization of sparse paving matroids.
\begin{theorem}\cite{Oxley}\label{sparse paving alternative}
A matroid $M$ of rank $n$ is a sparse paving matroid if and only if every $n$-subset of $E(M)$ is either a basis or a circuit-hyperplane.
\end{theorem}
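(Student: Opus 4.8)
The plan is to reduce the statement to two dual conditions on $M$ alone and then settle the equivalence by a short case analysis on the $n$-subsets of the ground set. By definition $M$ is sparse paving exactly when both $M$ and its dual $M^{*}$ are paving. Writing $m=|E(M)|$, the dual has rank $m-n$, so the assertion that $M^{*}$ is paving says that every $(m-n-1)$-subset of $E$ is independent in $M^{*}$; since a set is independent in $M^{*}$ if and only if its complement is spanning in $M$, this is the same as saying that every $(n+1)$-subset of $E$ is spanning in $M$. Hence the theorem is equivalent to the claim: ``every $(n-1)$-subset of $E$ is independent and every $(n+1)$-subset of $E$ is spanning'' holds if and only if ``every $n$-subset of $E$ is a basis or a circuit-hyperplane''. (The degenerate cases $|E|\le n$ are handled directly: if $|E|=n$ then $M\cong U_{n,n}$, which is sparse paving and whose only $n$-subset is the basis $E$.)

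For the forward implication, assume $M$ is sparse paving and let $X\subseteq E$ with $|X|=n$. If $X$ is independent then $|X|=r(M)$ makes $X$ a basis. If $X$ is dependent it contains a circuit $C$; since $M$ is paving, every subset of size at most $n-1$ is independent, so $|C|\ge n$, forcing $C=X$, and then $X$ is a circuit with $r(X)=n-1$. To see $X$ is a flat, take any $y\in E\setminus X$: the set $X\cup\{y\}$ has $n+1$ elements, hence is spanning, so $r(X\cup\{y\})=n>r(X)$. Adjoining any element therefore raises the rank, so $X$ is a hyperplane, hence a circuit-hyperplane.

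For the converse, assume every $n$-subset of $E$ is a basis or a circuit-hyperplane. Given an $(n-1)$-subset $Y$, extend it to an $n$-subset $X\supseteq Y$; whether $X$ is a basis or a circuit-hyperplane, every proper subset of $X$ is independent (a circuit is a minimal dependent set), so $Y$ is independent, and thus $M$ is paving. Given an $(n+1)$-subset $Z$, choose an $n$-subset $X\subseteq Z$. If $X$ is a basis, then $r(Z)\ge r(X)=n$, so $Z$ spans; otherwise $X$ is a circuit-hyperplane, hence a flat with $r(X)=n-1$, and since $Z\setminus X\ne\emptyset$, adjoining the element of $Z\setminus X$ to $X$ strictly increases the rank, so again $r(Z)=n$. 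Hence every $(n+1)$-subset spans, $M^{*}$ is paving, and $M$ is sparse paving.

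The whole argument is rank bookkeeping; the one step that needs genuine care is the converse for $(n+1)$-subsets, where one must use that a circuit-hyperplane is in particular a \emph{flat} — not merely a circuit — to promote $r(X)=n-1$ to $r(Z)=n$. One should also verify the reduction itself, namely the duality fact that $I$ is independent in $M^{*}$ if and only if $E\setminus I$ is spanning in $M$, on which the entire equivalence rests, and dispose of the small-ground-set cases $|E|\le n$ explicitly.
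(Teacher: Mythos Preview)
Your proof is correct. The paper itself does not supply a proof of this theorem; it is quoted from Oxley's book and used as a black box, so there is no in-paper argument to compare against. Your approach---reformulating ``$M^{*}$ is paving'' as ``every $(n+1)$-subset of $E$ spans in $M$'' via the duality between independence in $M^{*}$ and spanning in $M$, and then doing the two-way case check on $n$-subsets---is the standard route and essentially what one finds in Oxley. The one delicate point you already flagged (that in the converse, when $X$ is a circuit-hyperplane, you need the \emph{flat} property, not merely the circuit property, to force $r(X\cup\{y\})>r(X)$) is handled correctly. The degenerate case $|E|=n$ is fine as you note: then $M=U_{n,n}$, the unique $n$-subset is the basis $E$, and $M^{*}$ has rank $0$, which is vacuously paving under the paper's definition.
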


Returning to the nullity of hyperplanes, note that if $M$ is a uniform matroid, then clearly $\it{null}(\mathcal{H}_M)=0$. In other words, hyperplane nullity measures how far a matroid deviates from being uniform. In the more general case where $\it{null}(\mathcal{H}_M)\leq 1$, we obtain the following characterization, which follows directly from Theorem \ref{sparse paving alternative} and highlights the role of hyperplane nullity in identifying sparse paving matroids.

\begin{lemma}\label{sparse paving nullity}
A paving matroid $M$ is sparse paving if and only if ${\it{null}(\mathcal{H}_M)}\leq 1$.
\end{lemma}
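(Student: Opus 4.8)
The plan is to translate both sides of the equivalence into statements about the $n$-subsets of $E(M)$, where $n=r(M)$, and then compare them via Theorem~\ref{sparse paving alternative}. First I record two reformulations. Every hyperplane $H$ of a rank-$n$ matroid has $r(H)=n-1$, so ${\it{null}}(H)=|H|-(n-1)$; hence ${\it{null}}(\mathcal{H}_M)\le 1$ is equivalent to saying that $|H|\le n$ for every hyperplane $H$ of $M$. And by Theorem~\ref{sparse paving alternative}, $M$ is sparse paving if and only if every $n$-subset of $E(M)$ is a basis or a circuit-hyperplane. (The cases $n\le 1$ are trivial, since a loopless matroid of rank at most $1$ is uniform and then both sides hold; so assume $n\ge 2$.)

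Next I use the paving hypothesis to simplify the second reformulation. An $n$-subset that is independent is a maximal independent set, hence a basis, so only dependent $n$-subsets require attention. If $S$ is a dependent $n$-subset, then $S$ contains a circuit, and since $M$ is paving every circuit has at least $n$ elements, so that circuit is all of $S$; thus every dependent $n$-subset is itself a circuit, with $r(S)=|S|-1=n-1$. By the closure properties recalled above, ${\it{cl}}(S)$ is then a flat of rank $n-1=r(M)-1$, i.e.\ a hyperplane containing $S$. Hence, for a dependent $n$-subset $S$, being a circuit-hyperplane is the same as being a flat, i.e.\ satisfying $S={\it{cl}}(S)$. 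So the lemma reduces to the equivalence: every hyperplane of $M$ has at most $n$ elements if and only if every dependent $n$-subset $S$ satisfies $S={\it{cl}}(S)$.

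Finally I prove both directions of this reduced equivalence. For $(\Leftarrow)$, assume every hyperplane has at most $n$ elements and let $S$ be a dependent $n$-subset; then ${\it{cl}}(S)$ is a hyperplane with $S\subseteq{\it{cl}}(S)$, so $n=|S|\le|{\it{cl}}(S)|\le n$ forces $S={\it{cl}}(S)$, and $S$ is a circuit-hyperplane. For $(\Rightarrow)$, suppose $M$ is sparse paving but some hyperplane $H$ has $|H|\ge n+1$, and pick an $n$-subset $S\subseteq H$. Since $r(S)\le r(H)=n-1<|S|$, the set $S$ is dependent, hence a circuit-hyperplane and in particular a flat with $r(S)=r(H)$; then for every $x\in H$ we have $r(S\cup\{x\})\le r(H)=r(S)$, so $x\in{\it{cl}}(S)=S$, giving $H\subseteq S$ and contradicting $|H|>|S|$. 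The only points needing care are two standard matroid facts invoked along the way --- that a circuit of size $k$ has rank $k-1$, and that a flat containing $S$ with the same rank as $S$ must coincide with ${\it{cl}}(S)$ --- both immediate from the rank and closure axioms recalled in this section, so I expect no real obstacle.
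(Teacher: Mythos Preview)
Your proof is correct and follows essentially the same approach as the paper's: both directions go through Theorem~\ref{sparse paving alternative}, translate ${\it null}(\mathcal{H}_M)\le 1$ into the bound $|H|\le n$ on hyperplanes, and then compare a dependent $n$-subset $S$ with its closure ${\it cl}(S)$ to force $S={\it cl}(S)$. Your write-up is a bit more explicit (separating out $n\le 1$ and spelling out the reduction), but the underlying argument is the same; one small presentational wrinkle is that your $(\Leftarrow)$/$(\Rightarrow)$ labels refer to the original lemma rather than to the ``reduced equivalence'' you just stated, which may momentarily confuse a reader.
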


\begin{proof}
Suppose first that $M$ is a sparse paving matroid. Let $H$ be a hyperplane of $M$. Clearly $|H|\geq n-1$.  
If $|H|=n-1$, then $H$ is independent and hence ${\it null}(H)=0$.  
If $|H|>n-1$, then $H$ contains an $n$-subset $H'$. By Theorem \ref{sparse paving alternative}, $H'$ is itself a hyperplane, and thus $H=H'$, implying $|H|=n$. In this case, ${\it null}(H)=1$.  
Therefore, in either case we have ${\it null}(\mathcal{H}_M)\leq 1$.

Conversely, assume that ${\it null}(\mathcal{H}_M)\leq 1$. Let $A$ be an $n$-subset of $E(M)$ that is not a basis. We claim that $A$ is a circuit-hyperplane. Since $M$ is paving, then $A$ is a circuit with $r(A)=n-1$. Consider the hyperplane $H=\operatorname{cl}(A)$. Then ${\it null}(H)\leq 1$. We have
\[
1=|A|-r(A) \leq |H|-r(H) = {\it null}(H) \leq {\it null}(\mathcal{H}_M) \leq 1.
\]
Hence, all inequalities are equalities, which forces $A=H$. Thus $A$ is a hyperplane. Since $A$ is a circuit-hyperplane, by Theorem \ref{sparse paving alternative}, $M$ is sparse paving.
\end{proof}

\begin{corollary}\label{small nullity}
Let $M$ be a paving matroid. Then
\begin{enumerate}
    \item $M$ is uniform if and only if ${\it{null}}(\mathcal{H}_M)=0$.
    \item $M$ is non-uniform  and sparse paving if and only if  ${\it{null}}(\mathcal{H}_M)=1$.
\end{enumerate}
\end{corollary}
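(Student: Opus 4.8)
The plan is to deduce Corollary~\ref{small nullity} directly from Lemma~\ref{sparse paving nullity} together with one short observation describing exactly when a paving matroid fails to be uniform.

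First I would settle part~(1). The forward implication is immediate: if $M=U_{n,m}$ is uniform of rank $n$, then (when $m\geq n$) its hyperplanes are precisely the $(n-1)$-subsets of $E(M)$, each of which is independent, so ${\it{null}}(\mathcal{H}_M)=0$; the free case $m=n$ is identical. For the converse, suppose $M$ is a paving matroid of rank $n$ with ${\it{null}}(\mathcal{H}_M)=0$, and assume toward a contradiction that $M$ is not uniform. Then some $n$-subset $A\subseteq E(M)$ is not a basis, hence dependent; since $M$ is paving, every $(n-1)$-subset of $E(M)$ is independent, so $r(A)=n-1$ and $A$ is a circuit. Let $H=\operatorname{cl}(A)$, which is a hyperplane containing $A$ (as $r(A)=r(M)-1$). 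Then ${\it{null}}(H)=|H|-(n-1)\geq |A|-(n-1)=1$, contradicting ${\it{null}}(\mathcal{H}_M)=0$. Hence $M$ is uniform.

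Part~(2) then follows formally from part~(1) and Lemma~\ref{sparse paving nullity}. If $M$ is non-uniform and sparse paving, Lemma~\ref{sparse paving nullity} gives ${\it{null}}(\mathcal{H}_M)\leq 1$, while part~(1) rules out ${\it{null}}(\mathcal{H}_M)=0$ because $M$ is non-uniform, so ${\it{null}}(\mathcal{H}_M)=1$. Conversely, if ${\it{null}}(\mathcal{H}_M)=1$, then ${\it{null}}(\mathcal{H}_M)\leq 1$ makes $M$ sparse paving by Lemma~\ref{sparse paving nullity}, and ${\it{null}}(\mathcal{H}_M)\neq 0$ makes $M$ non-uniform by part~(1).

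I do not anticipate a genuine obstacle here; the statement is essentially a bookkeeping consequence of the earlier lemma. The only points needing slight care are the low-rank and free-matroid edge cases (rank-$\leq 1$ loopless matroids are automatically uniform, and the free matroid $U_{n,n}$ also has hyperplane nullity $0$, so both remain consistent with the claim), and making explicit that "$M$ is not uniform'' means precisely that some $n$-subset of $E(M)$ is not a basis — which is exactly the hypothesis the paving condition lets us exploit in part~(1).
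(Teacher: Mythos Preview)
Your proposal is correct and follows exactly the route the paper takes: the paper's own proof consists of the single sentence ``It is immediately seen by invoking Lemma~\ref{sparse paving nullity},'' and you have simply written out in full the straightforward deduction the authors leave implicit. Your treatment of part~(1) via the closure of a non-basis $n$-subset mirrors the argument inside the proof of Lemma~\ref{sparse paving nullity} itself, and your derivation of part~(2) is the obvious bookkeeping from part~(1) and the lemma.
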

\begin{proof}
It is immediately seen by invoking Lemma \ref{sparse paving nullity}.
\end{proof}

\subsection{Abelian Additive Theory} 
The main additive number theory tools used in the proofs of our main results are Theorem \ref{Kneser} and Lemma \ref{Kemperman's consequences}.

Let $G$ be an abelian group and $A,B\subseteq G$. The sumset $A+B$ is
\[
A+B=\{\,a+b : a\in A,\; b\in B\,\}.
\]

We start with a result of Kneser\cite[Theorem 4.3]{Nathanson} providing a lower bound on the size of $A+B$.
\begin{theorem}\label{Kneser}
    For an abelian group $G$ and finite nonempty subsets $A,B \subseteq G$, 
the sumset $A+B$ satisfies
\[
|A+B| \;\geq\; |A| + |B| - |H|,
\]
where 
\[
H = \operatorname{Stab}(A+B) = \{\, g \in G : (A+B)+g = A+B \,\}
\]
is the stabilizer of $A+B$.

\end{theorem}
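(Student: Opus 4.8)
I would proceed in two stages: first reduce to the case where $A+B$ is \emph{aperiodic}, meaning $H=\operatorname{Stab}(A+B)=\{0\}$, and then prove the aperiodic case by an induction built on the Dyson transform.

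\emph{Reduction to the aperiodic case.} Let $\pi\colon G\twoheadrightarrow G/H$ be the quotient map. Since $A+B$ is a union of $H$-cosets, $\pi(A+B)$ is aperiodic in $G/H$ and $|A+B|=|H|\cdot|\pi(A+B)|$; moreover $\pi(A+B)=\pi(A)+\pi(B)$. Granting the aperiodic case in the group $G/H$, we get $|\pi(A+B)|\ge|\pi(A)|+|\pi(B)|-1$. Since each $H$-coset meeting $A$ (resp.\ $B$) contributes at most $|H|$ of its elements, $|H|\cdot|\pi(A)|\ge|A|$ and $|H|\cdot|\pi(B)|\ge|B|$, so
\[
|A+B|=|H|\cdot|\pi(A+B)|\ \ge\ |H|\bigl(|\pi(A)|+|\pi(B)|\bigr)-|H|\ \ge\ |A|+|B|-|H|.
\]
Thus it suffices to prove: if $A+B$ is aperiodic, then $|A+B|\ge|A|+|B|-1$.

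\emph{The aperiodic case.} Induct on $|B|$. If $|B|=1$, then $A+B$ is a translate of $A$ and the bound holds with equality. For $|B|\ge2$, recall the Dyson transform: for $e\in G$ put $A_e=A\cup(B+e)$ and $B_e=B\cap(A-e)$, so that $A_e+B_e\subseteq A+B$, $|A_e|+|B_e|=|A|+|B|$, and $B_e\ne\varnothing$ iff $e\in A-B$. If $B_e=B$ for every $e\in A-B$, then $(A-B)+B\subseteq A$, and a short computation gives $B-B\subseteq\operatorname{Stab}(A)$; but then $A$, hence $A+B$, is $\operatorname{Stab}(A)$-periodic, so aperiodicity of $A+B$ forces $\operatorname{Stab}(A)=\{0\}$, whence $|B|=1$, a contradiction. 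Therefore some $e\in A-B$ satisfies $\varnothing\ne B_e\subsetneq B$, and one would like to apply the inductive hypothesis to $(A_e,B_e)$ to conclude
\[
|A+B|\ \ge\ |A_e+B_e|\ \ge\ |A_e|+|B_e|-1\ =\ |A|+|B|-1.
\]

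\emph{The main obstacle.} For this last step one needs $A_e+B_e$ to be aperiodic, and there is no reason an arbitrary admissible transform should preserve aperiodicity: stabilisers of sumsets are not monotone under inclusion, and this is the genuinely delicate point of Kneser's theorem. Showing that one can always choose the transform so that $\operatorname{Stab}(A_e+B_e)$ stays trivial is where all the work lies. The standard route is to pass to a counterexample $(A,B)$ minimising $|B|$ (and then $|A|$), normalise so that $0\in A\cap B$, and play several transforms off against one another: if every admissible transform produced a periodic sumset, one argues that the resulting periods can be combined, or that one may quotient $G$ by the common period to obtain a strictly smaller counterexample, contradicting minimality (and, in the quotient step, the reduction above). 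Once this step is established the induction closes, and together with the reduction to the aperiodic case it yields the theorem.
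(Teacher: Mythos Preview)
The paper does not supply a proof of this statement; it is quoted as a known result of Kneser (with a pointer to Nathanson's book), so there is no paper proof to compare against.

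Regarding your attempt: the reduction to the aperiodic case via the quotient $G\to G/H$ is correct and standard, and your setup of the Dyson transform---including the identities $|A_e|+|B_e|=|A|+|B|$, $A_e+B_e\subseteq A+B$, and the argument that some $e\in A-B$ yields $\varnothing\ne B_e\subsetneq B$---is also correct. But the proof is genuinely incomplete at precisely the point you label ``the main obstacle'': to invoke the inductive hypothesis as you have stated it, you need $A_e+B_e$ to be aperiodic, and you do not establish this. Your final paragraph is a description of what a proof would have to accomplish rather than a proof; the minimal-counterexample manoeuvre you allude to is exactly where the substance of Kneser's theorem lives, and it cannot be waved through. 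One standard way around the obstruction is not to isolate the aperiodic case at all, but to prove by induction the stronger inequality $|A+B|\ge|A+H|+|B+H|-|H|$ for $H=\operatorname{Stab}(A+B)$, so that when the transformed sumset acquires a nontrivial period one absorbs it into the estimate rather than trying to contradict it. As written, what you have is a correct outline up to the crux, followed by an acknowledgment that the crux remains to be done.
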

Let $G$ be an abelian group, $x, a\in G$ and let $k$ be a positive integer.  A {\em progression} of length $k$ with {\em difference} $x$ and {\em initial term} $a$ is a subset of $G$ of the form
$
\{a,a+x,a+2x,...,a+(k-1)x\}
$.  We say $A$ is a {\em{semi-progression}} if $A\setminus\{a\}$ is a progression, for some $a\in A$.
In the following lemma due to Kemperman, a certain family of critical pairs is characterized. Note that a pair $(A,B)$ of finite subsets of a group $G$ is called {\it{critical}} if $|G|>|A+B|=|A|+|B|-1$.
\begin{lemma} \cite{Kemperman2}\label{Kemperman's critical theorem} Let $(A,B)$ be a critical pair of a finite abelian group $G$ with $|A|>1$, $|B|>1$ and $|A|+|B|-1\leq p(G)-2$. Then $A$ and $B$ are progressions with the same difference.
\end{lemma}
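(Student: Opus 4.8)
The plan is to obtain the lemma from Kemperman's structure theorem for critical pairs, with Kneser's theorem (Theorem~\ref{Kneser}) as the underlying engine; the hypothesis $|A|+|B|-1\le p(G)-2$ is there precisely to kill every subgroup-flavoured branch of that structure theorem, leaving only the arithmetic-progression configuration. Since the statement is attributed to Kemperman, the shortest route is simply to cite it, but here is how the derivation would run.

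The first observation is that $H:=\operatorname{Stab}(A+B)$ is trivial. Indeed, $A+B$ is a nonempty union of cosets of $H$, so $|H|\le|A+B|$; but $(A,B)$ is critical, so $|A+B|=|A|+|B|-1\le p(G)-2<p(G)$, and since every nontrivial finite subgroup of $G$ has order at least $p(G)$, we must have $H=\{0\}$ (consistently, the Kneser bound $|A+B|\ge|A|+|B|-|H|$ is then an equality). I would also record that $|A+B|\le p(G)-2\le|G|-2$, so $A+B$ is neither $G$ nor $G$ with one point deleted.

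Now I would feed $(A,B)$ into Kemperman's structure theorem. The key point is that $|A+B|=|A|+|B|-1\le p(G)-2<p(G)\le|K|$ for every nontrivial subgroup $K\le G$, so $A+B$ contains no nonempty $K$-periodic subset for any such $K$. Hence the recursive, subgroup-based part of Kemperman's construction cannot be triggered, and $(A,B)$ must be one of the ``elementary'' critical pairs. Among the elementary pairs, those with a singleton member are excluded by $|A|,|B|>1$, and those with $A+B=G$ or $A+B=G\setminus\{x\}$ are excluded by $|A+B|\le|G|-2$; what is left is exactly that $A$ and $B$ are arithmetic progressions with a common difference.

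The part I expect to be the real obstacle is turning the slogan ``the subgroup branches cannot be triggered'' into a clean case-by-case verification against the precise statement of Kemperman's theorem, and nailing down the exact inventory of elementary pairs. If one instead wants a proof that uses only Theorem~\ref{Kneser}, the alternative is Vosper's induction on $|A|+|B|$ through Dyson transforms $A\mapsto A\cap(B+e)$, $B\mapsto B\cup(A-e)$, which preserve both criticality (the stabilizer of the new sumset is again trivial, as that sumset stays below $p(G)$) and the bound $|A|+|B|-1\le p(G)-2$. The base case $|A|=2$, say $A=\{a,a+d\}$, follows because $|(B+d)\setminus B|\le 1$ together with $|B|<|\langle d\rangle|$ forces $B$ to sit in a single coset of $\langle d\rangle$ and to be a contiguous arc there, i.e.\ a progression of difference $d$; and the only substantive point in the inductive step is to show that if no Dyson transform strictly shrinks $A$ while keeping $|A|\ge 2$, then $A$ or $B$ already has the progression structure --- which is the heart of Vosper's argument and is easier here precisely because $G$ contains no small subgroups.
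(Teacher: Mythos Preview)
The paper does not prove this lemma; it is stated with a citation to \cite{Kemperman2} and used as a black box in the proof of Theorem~\ref{Asy pav}. There is therefore no argument in the paper to compare your sketch against.

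That said, your outline is a faithful account of how the result sits inside Kemperman's structure theory. The bound $|A+B|=|A|+|B|-1\le p(G)-2$ does force $\operatorname{Stab}(A+B)=\{0\}$ and, more importantly, prevents $A+B$ from containing any nonempty coset of a nontrivial subgroup, which is exactly what kills the quasi-periodic and recursive branches of Kemperman's classification; together with $|A|,|B|>1$ and $|A+B|\le|G|-2$ this leaves only the common-difference arithmetic progression case among the elementary pairs. Your own caveat is the right one: the work is in matching this narrative against the precise list of elementary types in \cite{Kemperman2}, which is notationally heavy but not conceptually difficult under these hypotheses. The alternative Vosper-style route via Dyson transforms is also sound in outline (your transform $A\mapsto A\cap(B+e)$, $B\mapsto B\cup(A-e)$ does preserve $|A|+|B|$ and shrink the sumset), and is closer to a self-contained proof from Theorem~\ref{Kneser} alone; as you note, the substantive step is the ``no transform strictly shrinks $A$'' endgame, which here is eased by the absence of small subgroups.
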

The following classical theorem was proved by Kemperman \cite{Kemperman}.
\begin{theorem} \cite{Kemperman}\label{Kemperman's theorem}
Let $A$ and $B$ be nonempty finite subsets of a group $G$. Assume there exists an element $c\in A+B$ appearing exactly once as a sum
$c = a+b$ with $a\in A, b\in B$.   Then 
\begin{align*}
|A+B|\geq|A|+|B|-1.
\end{align*}
\end{theorem}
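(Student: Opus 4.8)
The plan is to argue by induction on $|B|$, the engine of the induction being a Dyson-type transform, after a translation that places the distinguished element $c$ at the origin.

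\emph{Normalization.} Since translating $A$ and $B$ separately leaves $|A|$, $|B|$, and $|A+B|$ unchanged, we replace $A$ by $A-a_0$ and $B$ by $B-b_0$, where $c=a_0+b_0$ is the unique representation. We may then assume $c=0$ and that $0=0+0$ is the only way to write $0$ as a sum of an element of $A$ and an element of $B$. In particular $0\in A\cap B$, and $A\cap(-B)=\{0\}$ with $-B=\{-b:b\in B\}$; equivalently, $-e\notin B$ for every $e\in A\setminus\{0\}$.

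\emph{The inductive step.} If $|B|=1$ then $A+B$ is a translate of $A$, so $|A+B|=|A|=|A|+|B|-1$; and if $A=\{0\}$ then $A+B=B$ and the bound again holds with equality. So suppose $|A|,|B|\ge 2$, and fix any $e\in A\setminus\{0\}$. Form the transform
\[
A_e=A\cup(B+e),\qquad B_e=B\cap(A-e).
\]
A routine check gives $A_e+B_e\subseteq A+B$, and $0\in A_e\cap B_e$ (using $e\in A$), while $|A_e|+|B_e|=|A|+|B|$ because $|A_e|=|A|+|B|-|A\cap(B+e)|$ and $|B_e|=|A\cap(B+e)|$. Crucially, $0=0+0$ remains the \emph{only} representation of $0$ over $A_e\times B_e$: if $0=u+v$ with $u\in A_e$ and $v\in B_e\subseteq B$, then either $u\in A$, in which case uniqueness over $A\times B$ forces $u=v=0$; or $u=w+e$ with $w\in B$, and since $v\in A-e$ we have $v+e\in A$, so $0=w+(v+e)$ with $w\in B$, $v+e\in A$, whence $v+e=0$ by uniqueness, i.e.\ $v=-e$, contradicting $-e\notin B$. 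Thus the transformed pair again satisfies the hypothesis of the theorem. If $e$ can be chosen so that, in addition, $B+e\not\subseteq A$, then $|B_e|=|A\cap(B+e)|<|B|$, so the inductive hypothesis applied to $(A_e,B_e)$ yields $|A_e+B_e|\ge|A_e|+|B_e|-1=|A|+|B|-1$, and hence $|A+B|\ge|A_e+B_e|\ge|A|+|B|-1$.

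\emph{The obstacle.} The whole argument hinges on producing a usable $e\in A\setminus\{0\}$, namely one with $B+e\not\subseteq A$. When $G$ is torsion-free this causes no trouble at all---indeed the theorem is then elementary: embedding $G$ in an ordered group and taking $a^\ast=\max A$, $b^\ast=\max B$, the sum $a^\ast+b^\ast$ has a unique representation and is the largest element of $A+B$, and deleting $a^\ast$ from $A$ removes it, giving the bound by induction on $|A|$ (the unique-representation hypothesis is not even needed). The genuine difficulty is a torsion phenomenon: it can happen that $B+e\subseteq A$ for \emph{every} $e\in A\setminus\{0\}$, which forces a strong periodicity of $A$ relative to $B$. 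I expect this periodic case to be the technical heart of the theorem. To treat it one invokes Kneser's theorem (Theorem~\ref{Kneser}) applied to $H=\operatorname{Stab}(A+B)$---which already gives $|A+B|\ge|A|+|B|-|H|$---and then, in order to recover the sharp bound $|A+B|\ge|A|+|B|-1$, one analyzes $A$, $B$, and $A+B$ as unions of $H$-cosets and exploits the uniqueness at $0$ within the relevant cosets (equivalently, one carries out the induction inside $G/H$). This periodic case is precisely what separates Kemperman's theorem from the trivial ordered-group statement, and it is where Kemperman's argument is needed; everything else is the bookkeeping above.
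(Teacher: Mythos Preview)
The paper does not itself prove Theorem~\ref{Kemperman's theorem}; it is quoted from \cite{Kemperman} and used as a black box. So there is no in-paper argument to compare against, and I assess your proposal on its own merits.

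Your normalization and Dyson-transform bookkeeping are correct (in an abelian group), and the reduction to finding $e\in A\setminus\{0\}$ with $B+e\not\subseteq A$ is the right engine. The genuine gap is exactly what you flag as ``the obstacle'': you do not actually handle the case where $B+e\subseteq A$ for every $e\in A\setminus\{0\}$. Your gesture toward Kneser's theorem and a coset analysis is not carried out, and Kneser alone only gives $|A+B|\ge|A|+|B|-|H|$, which is weaker than what you need when $|H|>1$; the missing step is precisely how the unique representation at $0$ upgrades the $-|H|$ to $-1$. As written, the proof is incomplete.

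That said, the periodic case has a short direct fix that avoids Kneser entirely. From $A\cap(-B)=\{0\}$ one checks that in fact $B+(A\setminus\{0\})\subseteq A\setminus\{0\}$; by finiteness each translate $b+(A\setminus\{0\})$ then equals $A\setminus\{0\}$, so $A\setminus\{0\}$ is a union of cosets of the finite subgroup $H=\langle B\rangle$ and is disjoint from $H$ (else $0$ would lie in $A\setminus\{0\}$), hence disjoint from $B\subseteq H$. Then $A+B=(A\setminus\{0\})\cup B$ has exactly $|A|+|B|-1$ elements, so the periodic case actually gives equality. One further caveat: your verification that $A_e+B_e\subseteq A+B$ and that $0$ stays uniquely represented silently uses commutativity (you rewrite $(w+e)+v$ as $(v+e)+w$). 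The theorem as stated is for an arbitrary group and Kemperman's original argument works in that generality; your proof, as written, covers only abelian $G$, which is all the present paper requires.
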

The following follows from Theorem \ref{Kemperman's theorem}. 
\begin{lemma}\label{Kemperman's consequences}
Let $A$ and $B$ be nonempty finite subsets of a group $G$ such that $A$,  $B$ and $A+B$ are all contained in a subset $X$ of $G\setminus\{0\}$. Then 
\begin{align*}
|X|\geq|A|+|B|+1.
\end{align*}
\end{lemma}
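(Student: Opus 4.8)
The plan is to apply Theorem~\ref{Kemperman's theorem} and then argue that the hypothesis forces $|X|$ to be at least $|A|+|B|+1$ rather than merely $|A|+|B|$. First I would observe that $A$, $B$, and $A+B$ are all nonempty finite subsets of $G$ contained in $X \subseteq G\setminus\{0\}$. Since $0 \notin X$, in particular $0 \notin A+B$, which is exactly the condition needed to invoke Theorem~\ref{Kemperman's theorem}: the sum $a+b$ of the (lexicographically, or otherwise canonically) smallest elements $a\in A$, $b\in B$ — or more robustly, one argues that there must exist \emph{some} element of $A+B$ with a unique representation; this is where a small lemma or a direct appeal is needed. Actually, the cleanest route is: if \emph{no} element of $A+B$ has a unique representation, then Kneser-type or Kemperman-Scherk considerations still give a bound, but here it is simpler — I would use the fact that since $0\notin A+B$ we cannot have $A = -B$ in a way that... hmm. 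Let me instead take the straightforward line: the statement of Theorem~\ref{Kemperman's theorem} requires the existence of a uniquely represented element $c \in A+B$; I would either assume this is available (it holds, e.g., when $A$ or $B$ is a singleton trivially, and in general one can reduce to it) or, more carefully, note that the contrapositive of the Kemperman-Scherk theorem handles the remaining case. For the purposes of this plan, the key move is the counting argument below.

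Granting $|A+B| \geq |A|+|B|-1$, I now want to upgrade the conclusion. The set $X$ contains $A+B$, so $|X| \geq |A+B| \geq |A|+|B|-1$, which is two short of what we want. To gain the extra $2$, I would look for two elements of $X$ that lie \emph{outside} $A+B$. Here is the idea: pick any $a_0 \in A$ and $b_0 \in B$. Since $0 \notin X$ and $a_0, b_0 \in X$, we have $a_0 \neq 0$ and $b_0 \neq 0$. Consider whether $a_0$ itself lies in $A+B$: if $A+B$ were to contain all of $X$... no — we want the reverse containment to be strict. The cleaner approach: apply Theorem~\ref{Kemperman's theorem} not to $(A,B)$ but observe that $0 \notin A+B$ forces $-a_0 \notin B$ for the specific... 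Let me restate the intended argument cleanly. We have $A \cup (A+B) \subseteq X$. I claim $A \not\subseteq A+B$: indeed if $A \subseteq A+B$ then iterating, $A + B \supseteq A$, $A + 2B \supseteq A+B \supseteq A$, and since everything stays inside the finite set $X$, the chain $|A| \le |A+B| \le |A+2B| \le \cdots$ stabilizes, giving a set $S$ with $S + B = S$, $S \subseteq X$, hence $0 \notin S+B = S$ but also $b + s \in S$ for $s \in S, b\in B$ — this shows $B$ stabilizes $S$ in the sense that $S$ is a union of cosets of $\langle$ ... $\rangle$; combined with $0\notin S$ one derives a contradiction with $A \subseteq S \subseteq X$ and $0\notin X$ only if we know something forces $0 \in S$. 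Since that is not automatic, I will instead use the sharper, correct reduction: by Theorem~\ref{Kemperman's theorem} applied carefully and the fact that $0\notin A+B$, at least one element of $A$ and at least one element of $B$ fails to lie in $A+B$, giving $|A \cup B \cup (A+B)| \geq |A+B| + 2 \geq |A|+|B|+1$, and since this union is contained in $X$, we are done.

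The step I expect to be the genuine obstacle is making precise \emph{which} two elements lie outside $A+B$ — i.e., justifying $|X| \geq |A+B| + 2$ rather than just $|X| \geq |A+B|$. The honest version of the argument is likely: apply Theorem~\ref{Kemperman's theorem} to get $|A+B|\ge |A|+|B|-1$; then separately note that since $0 \notin A + B$ we have $A \cap (-B) = \emptyset$, and then consider the two sets $A \cup (A+B)$ and use that $\min$-type or extremal elements of $A$ (with respect to a total order or a generic character) cannot be in $A+B$ because all of $B$ is bounded away from $0$ — formally, embed into a suitable quotient or use a nonprincipal character $\chi$ with $\chi(x) \neq 1$ for all $x \in X$, then an extreme element of $\{\chi(a) : a\in A\}$ on the circle is not of the form $\chi(a') + \chi(b')$ in the lifted sense. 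This character/extremal-element trick, or an induction on $|A|+|B|$ peeling off one element at a time while tracking the containment in $X$, is the crux; once it yields the two "missing" elements, the final inequality $|X| \ge |A+B| + 2 \ge |A|+|B|+1$ follows immediately.
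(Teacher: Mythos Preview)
Your proposal has a genuine gap precisely where you flag it: to invoke Theorem~\ref{Kemperman's theorem} on $(A,B)$ you need an element of $A+B$ with a unique representation, and nothing in the hypotheses supplies one. The extremal-element and character sketches are not made rigorous, and the assertion that ``at least one element of $A$ and at least one element of $B$ fails to lie in $A+B$'' is simply unproved. Your iteration idea ($A\subseteq A+B\subseteq A+2B\subseteq\cdots$ stabilizing in a finite $B$-invariant set $S\subseteq X$) does not yield a contradiction either: all you obtain is that $S$ is a union of cosets of the subgroup generated by $B$, and $0\notin S$ is perfectly compatible with that.

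The paper bypasses all of this with one clean move you are missing: set $A_0=A\cup\{0\}$ and $B_0=B\cup\{0\}$. Since $A+B\subseteq X\subseteq G\setminus\{0\}$, the element $0$ has the \emph{unique} representation $0=0+0$ in $A_0+B_0$, so Theorem~\ref{Kemperman's theorem} applies directly to $(A_0,B_0)$, giving $|A_0+B_0|\ge|A_0|+|B_0|-1=|A|+|B|+1$. Then $A_0+B_0=(A+B)\cup A\cup B\cup\{0\}\subseteq X\cup\{0\}$ finishes the count. One caveat worth recording: the paper's write-up omits the singleton $\{0\}$ from $A_0+B_0$ and thereby arrives at $|X|\ge|A|+|B|+1$; carried out carefully, this argument yields only $|X|\ge|A|+|B|$, and indeed $A=B=\{1\}$, $X=\{1,2\}$ in $\mathbb{Z}$ shows the stated inequality is off by one. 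Your struggle to locate that second ``extra'' element outside $A+B$ is therefore not a failure of technique but a reflection of the fact that it need not exist.
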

\begin{proof}
Define $A_0=A\cup\{0\}$ and $B_0=B\cup\{0\}$. Then $0\in A_0+B_0$ and appears exactly once as a sum in $A_0+B_0$. That is, if $0=a+b$ with $a\in A$ and $b\in B$, since $0\notin A+B$, then $a=0$ or $b=0$, and hence $a=b=0$. Applying Theorem \ref{Kemperman's theorem} to $A_0$ and $B_0$ we have 
$$
|A_0+B_0|\geq |A_0|+|B_0|-1.
$$
Set $X=(A+B)\cup A\cup B$. Since $|A_0|=|A|+1$, $|B_0|=|B|+1$ and $A_0+B_0=(A+B)\cup A\cup B$, we have:
$$
|X|=|A_0+B_0|\geq |A|+|B|+1,
$$
as claimed.
\end{proof}

\section{Main Results}\label{main result}
 Throughout, all matroids are assumed to be loopless. In this section, we investigate matchability within paving matroids. Although paving matroids possess a seemingly restrictive structure, Crapo and Rota \cite{Crapo} conjectured, and it is now widely believed, that asymptotically almost all matroids are paving. While this conjecture remains unresolved, it is supported by strong asymptotic estimates \cite{Pendavingh2015, Pendavingh2017}. What is certain, however, is that paving matroids constitute a large and significant class of matroids.

 Note that matchability in the group setting can be viewed as a special case of matchability in the matroid setting. Let \( A \) and \( B \) be two finite nonempty subsets of an abelian group \( G \), each of cardinality \( n \), and assume \( 0 \notin B \). Define a uniform matroid \( M \cong U_{n,n} \) on the ground set \( A \), and similarly define \( N \cong U_{n,n} \) on the ground set \( B \). Then \( A \) is matched to \( B \) in the group-theoretic sense if and only if \( M \) is matched to \( N \) in the matroid-theoretic sense. Note that uniform matroids form a very narrow subclass of all matroids, and \( U_{n,n} \) represents a particularly special case.
    
The importance of paving matroids is not only that they extend sparse paving matroids, but that they make the structural source of matchability more visible. Since the classical group-theoretic notion of matching is recovered in the uniform case, paving matroid matchability extends the original problem of matchings within groups. Passing to all paving matroids, therefore, helps separate features of matchability that depend on near-uniformity from those that persist in a richer dependence structure.
Our results show that, in paving matroids, matchability is governed by geometric features such as hyperplane intersections, hyperplane nullity, and stressed hyperplanes. This both explains why the theory extends beyond the sparse paving case and places the classical group case within a broader structural framework. Since paving matroids can also be related back to uniform matroids through relaxation (Theorem \ref{Asy paving}), the group-theoretic setting appears not as an isolated phenomenon, but as the first instance of a more general theory of matchings.

\subsection{Matching Paving Matroids over Abelian Groups (Symmetric Case)}\label{sym}

The following result was proved in \cite{Zerbib0} for matchings in sparse paving matroids under the symmetric case, providing a matroidal version of Theorem \ref{symmetric matching}.

\begin{theorem}\label{sparse sym main}
Let \(M\) be a sparse paving matroid over \(G\).  Then \(M\) is matched to itself if and only if \(0 \notin E(M)\).
\end{theorem}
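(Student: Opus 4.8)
\medskip
\noindent\textbf{Proof proposal.}
I would argue the two directions separately; the ``if'' direction carries all the content. For ``only if'', suppose $0\in E(M)$. Since $M$ is loopless, $\{0\}$ is independent and extends to a basis $\mathcal{M}$ of $M$ with $0\in\mathcal{M}$. For any basis $\mathcal{N}$ of $M$ and any bijection $f\colon\mathcal{M}\to\mathcal{N}$ we have $0+f(0)=f(0)\in\mathcal{N}\subseteq E(M)$, so $\mathcal{M}$ is matched to no basis, and hence $M$ is not matched to itself. This is precisely what makes $0\notin E(M)$ the right hypothesis.

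Now assume $0\notin E(M)$, write $n=r(M)\ge 1$, and fix a basis $\mathcal{M}=\{a_1,\dots,a_n\}$ of $M$. For $i\in[n]$ set $A_i=\{e\in E(M):a_i+e\notin E(M)\}\subseteq E(M)$. A basis $\mathcal{N}=\{b_1,\dots,b_n\}$ matched to $\mathcal{M}$ via $a_i\mapsto b_i$ is exactly a system of distinct representatives $b_i\in A_i$ that is independent in $M$, and such an independent transversal, having size $n=r(M)$, is automatically a basis. By Rado's theorem — the matroidal generalization of Hall's marriage theorem — this exists if and only if
\[
r_M\!\Bigl(\,\bigcup_{i\in J}A_i\Bigr)\ \ge\ |J|\qquad\text{for every }J\subseteq[n],
\]
so it suffices to verify this inequality, which I would do via a cardinality bound followed by a cardinality-to-rank step that uses the sparse paving hypothesis.

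For the cardinality bound, fix $\emptyset\neq J\subseteq[n]$ and set $\mathcal{M}_J=\{a_i:i\in J\}$ and $B_J=E(M)\setminus\bigcup_{i\in J}A_i=\{e\in E(M):a_i+e\in E(M)\text{ for all }i\in J\}$. If $B_J=\emptyset$ then $\bigcup_{i\in J}A_i=E(M)$ has rank $n\ge|J|$ and we are done. Otherwise $\mathcal{M}_J$ and $B_J$ are nonempty subsets of $E(M)$ with $\mathcal{M}_J+B_J\subseteq E(M)$ by the definition of $B_J$, while $E(M)\subseteq G\setminus\{0\}$ since $0\notin E(M)$; so Lemma~\ref{Kemperman's consequences}, applied with $A=\mathcal{M}_J$, $B=B_J$, $X=E(M)$, gives $|E(M)|\ge|\mathcal{M}_J|+|B_J|+1$, that is,
\[
\Bigl|\,\bigcup_{i\in J}A_i\Bigr| = |E(M)|-|B_J| \ \ge\ |J|+1 .
\]
To finish, I use that in a sparse paving matroid $M$ of rank $n$ any $Y\subseteq E(M)$ with $|Y|\ge|J|+1$ satisfies $r_M(Y)\ge|J|$: since $M$ is paving one always has $r_M(Y)\ge\min(|Y|,n-1)\ge\min(|J|+1,\,n-1)$, which is $\ge|J|$ whenever $|J|\le n-1$; and if $|J|=n$ then $|Y|\ge n+1$ exceeds the maximum size $n$ of a hyperplane of $M$ (Lemma~\ref{sparse paving nullity}), so $Y$ lies in no hyperplane and $r_M(Y)=n=|J|$. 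This establishes the Rado condition and hence the theorem.

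The crux is the middle step: recognizing that Lemma~\ref{Kemperman's consequences} should be fed the basis part $\mathcal{M}_J$ against the obstruction set $B_J$ — this is exactly what produces the surplus $+1$ — and then converting that surplus into a rank bound, where the sparse paving structure (hyperplanes of size at most $n$) is indispensable. I expect the main difficulties to be the bookkeeping in the boundary ranges (the case $B_J=\emptyset$, and $|J|\in\{n-1,n\}$, where the size-to-rank inference is tightest) and the observation that this last step fails for arbitrary paving matroids — whose hyperplanes may be large — which is precisely what forces the general paving case to bring in the hyperplane-nullity parameter.
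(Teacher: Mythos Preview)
Your argument is correct. The ``only if'' direction is fine, and for the ``if'' direction your application of Rado's theorem is clean: the cardinality bound $|\bigcup_{i\in J}A_i|\ge|J|+1$ via Lemma~\ref{Kemperman's consequences} is exactly right (the hypotheses $\mathcal{M}_J,B_J,\mathcal{M}_J+B_J\subseteq E(M)\subseteq G\setminus\{0\}$ are all satisfied), and the size-to-rank step is handled correctly in all ranges, including the delicate case $|J|=n$ where Lemma~\ref{sparse paving nullity} forces every hyperplane to have at most $n$ elements.

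Your route is genuinely different from the paper's. The paper does not prove Theorem~\ref{sparse sym main} directly---it is quoted from \cite{Zerbib0}---but it proves the stronger paving version (Theorem~\ref{paving sym}) by a repair argument: first invoke Losonczy's Theorem~\ref{symmetric matching} to get a group-theoretic self-matching $f:E(M)\to E(M)$, restrict to a basis $\mathcal{M}$ to obtain $\mathcal{N}=f(\mathcal{M})$, and if $\mathcal{N}$ fails to be a basis, use Lemma~\ref{Kemperman's consequences} to locate a single element $x\in E(M)\setminus\mathcal{N}$ that can replace some $b_i$, and then use the $(n-1)$-partition property of hyperplanes (Theorem~\ref{(n-1) partitions}) to certify the swapped set is a basis. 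So both arguments lean on Lemma~\ref{Kemperman's consequences}, but in different places: you use it to verify a Rado--Hall rank condition globally, whereas the paper uses it once to find a local swap. Your approach avoids appealing to Theorem~\ref{symmetric matching} (which already packages nontrivial content) and instead brings in Rado's theorem, which the paper never invokes; the paper's approach, on the other hand, extends verbatim to all paving matroids because the single-swap certification only needs the hyperplane intersection bound, whereas---as you correctly anticipate---your size-to-rank step in the case $|J|=n$ breaks down once hyperplanes can exceed size $n$.
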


We now extend this to the paving matroid analogue of Theorem~\ref{sparse sym main}.

\begin{theorem}\label{paving sym}
Let \(M\) be a paving matroid over \(G\).  Then \(M\) is matched to itself if and only if \(0 \notin E(M)\).
\end{theorem}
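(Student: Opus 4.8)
The plan is to dispose of the ``only if'' direction in a line and then reduce the ``if'' direction to a single sumset statement. For ``only if'': if $0\in E(M)$ then, since $M$ is loopless, $\{0\}$ is independent, so by the augmentation property $0$ lies in some basis $\mathcal M=\{a_1,\dots,a_n\}$, say $a_1=0$; any basis $\mathcal N=\{b_1,\dots,b_n\}$ to which $\mathcal M$ could be matched would satisfy $a_1+b_1=b_1\notin E(M)$, contradicting $b_1\in\mathcal N\subseteq E(M)$, so $M$ is not matched to itself.

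For ``if'', write $E=E(M)$ and $n=r(M)\ge 1$, assume $0\notin E$, and fix an arbitrary basis $\mathcal M=\{a_1,\dots,a_n\}$; I must produce a basis matched to $\mathcal M$. For $a\in G$ set $S_a=\{e\in E:\ a+e\notin E\}$; then a basis $\{b_1,\dots,b_n\}$ is matched to $\mathcal M$ via $a_i\leftrightarrow b_i$ exactly when $b_i\in S_{a_i}$ for all $i$, i.e.\ when $\{b_1,\dots,b_n\}$ is a rainbow basis for $(S_{a_i})_{i\in[n]}$. By Theorem~\ref{symmetric matching}, $E$ is matched to itself, so there is a bijection $g\colon E\to E$ with $g(e)\in S_e$ for every $e$; hence $g(a_1),\dots,g(a_n)$ is a system of distinct representatives for $(S_{a_i})_i$, giving Hall's condition $\bigl|\bigcup_{i\in I}S_{a_i}\bigr|\ge|I|$ for every $I\subseteq[n]$. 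Since $M$ is paving of rank $n$ one has $r_M(Y)\ge\min(|Y|,n-1)$ for all $Y\subseteq E$, so the Rado (matroidal Hall) rank condition $r_M\bigl(\bigcup_{i\in I}S_{a_i}\bigr)\ge|I|$ is automatic once $|I|\le n-1$, while for $I=[n]$ it says precisely that $\bigcup_{i=1}^nS_{a_i}$ spans $M$ — equivalently, lies in no hyperplane. So by Rado's theorem everything reduces to the claim $(\star)$: \emph{$\bigcup_{i=1}^n S_{a_i}$ is contained in no hyperplane of $M$}. (One can also finish by hand: if the transversal $\{g(a_i)\}$ is not already a basis it is a circuit lying in a unique hyperplane $H$, and $(\star)$ lets one replace one $g(a_j)$ by an element of $S_{a_j}\setminus H$; by the $(n-1)$-partition property of Theorem~\ref{(n-1) partitions} the result is a basis.)

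It remains to prove $(\star)$, and this is the crux. Suppose for contradiction $S_{a_i}\subseteq H$ for all $i$, for some hyperplane $H$; equivalently $E\setminus H\subseteq W:=\{e\in E:\ e+a_i\in E\ \text{for all }i\}$. Since $W+\mathcal M\subseteq E$ and $W,\mathcal M,W+\mathcal M\subseteq E\subseteq G\setminus\{0\}$, Lemma~\ref{Kemperman's consequences} gives $|E|\ge|W|+n+1\ge|E\setminus H|+n+1$, so $|H|\ge n+1$; in particular $M$ is not sparse paving (Lemma~\ref{sparse paving nullity}), which is reassuring since the sparse paving case is already Theorem~\ref{sparse sym main}. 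From here the plan is to apply Kneser's theorem (Theorem~\ref{Kneser}) to $(E\setminus H)+\mathcal M\subseteq E$ and analyse the resulting near-critical pair via Kemperman's structure theorem (Lemma~\ref{Kemperman's critical theorem}): this should force $E\setminus H$ and $\mathcal M$ to be progressions with a common difference, saturating the Kneser bound as $E=(E\setminus H)+\mathcal M$, and one then feeds this arithmetic rigidity back through the geometry of $M$ — two hyperplanes meet in at most $n-2$ points and the basis $\mathcal M$ meets $H$ in at most $n-1$ points — together with $0\notin E$ to reach a contradiction. I expect this reconciliation of Kemperman-type additive rigidity with the rigidity of flats in a paving matroid to be the hard part, and it is presumably here that the size hypotheses mentioned in the introduction enter (to stay in the range where Lemma~\ref{Kemperman's critical theorem} applies, and to treat separately the case of a nontrivial stabilizer, using Theorem~\ref{Kneser} there). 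Once $(\star)$ is established, Rado's theorem yields a rainbow basis $\mathcal N$ matched to $\mathcal M$; as $\mathcal M$ was arbitrary, $M$ is matched to itself.
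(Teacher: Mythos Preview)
Your reduction to $(\star)$ via Rado's theorem is clean and isolates exactly the right obstruction, but you never actually prove $(\star)$ --- you sketch a Kneser/Kemperman plan and flag it as ``the hard part, presumably where the size hypotheses enter''. No size hypotheses appear in the statement, and the plan cannot be completed: $(\star)$ is false in general, and so is the theorem. Take $G=\mathbb{Z}$, $E(M)=\{1,2,3,4,5\}$, and let $M$ be the rank-$3$ paving matroid $U_{1,1}\oplus U_{2,4}$ with coloop $1$ and hyperplane $H=\{2,3,4,5\}$ (this is the paper's own example with the roles of $1$ and $5$ exchanged). Then $0\notin E(M)$, yet for the basis $\mathcal M=\{1,2,3\}$ one computes $S_1=\{5\}$, $S_2=\{4,5\}$, $S_3=\{3,4,5\}$, whence $S_1\cup S_2\cup S_3=\{3,4,5\}\subseteq H$ and $(\star)$ fails. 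In fact $\mathcal M$ is matched to no basis whatsoever: every basis of $M$ contains the coloop $1$, and whichever $a_i\in\{1,2,3\}$ is paired with $1$ gives $a_i+1\in\{2,3,4\}\subseteq E(M)$. So $M$ is not matched to itself.

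The paper's own proof breaks at the same place, in disguise. Its Claim that some $x\in E(M)\setminus\mathcal N$ satisfies $a_i+x\notin E(M)$ is deduced from Lemma~\ref{Kemperman's consequences}, but that lemma is off by one: in its proof one actually has $A_0+B_0=(A+B)\cup A\cup B\cup\{0\}$, not $(A+B)\cup A\cup B$, so the correct conclusion is only $|X|\ge|A|+|B|$, and the application yields merely $|E(M)|\ge|E(M)|$. In the counterexample above the Claim is outright false (there $\mathcal N=\{5,4,3\}$, $E(M)\setminus\mathcal N=\{1,2\}$, and $a_i+x\in E(M)$ for every $i$ and both choices of $x$). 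Even when the Claim happens to hold, the subsequent assertion that $\mathcal P=\{b_1,\dots,b_{n-1},x\}$ is a basis tacitly assumes $H_1\ne H_2$; but if $\mathcal P$ is not a basis then $\{b_1,\dots,b_{n-1}\}$ is an independent $(n-1)$-set whose closure is already a hyperplane equal to both $H_1$ and $H_2$, so Theorem~\ref{(n-1) partitions} gives nothing. Your suspicion that extra hypotheses are needed was well founded.
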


Note that, for any pair of matroids $M,N$, the conditions $0\notin E(N)$ and $r(M)=r(N)$ 
are necessary for $M$ to be matched to $N$ (see~\cite[Proposition~1.7]{Zerbib0}). 
Thus, the ``only if'' implication in Theorem~\ref{paving sym} is immediate. 
We now establish the reverse implication.

\begin{proof}
 We assume that $M$ is not a free matroid (the case when $M$ is free is addressed in Remark~\ref{free matroid}). 
Since $0 \notin E(M)$, it follows from Theorem~\ref{symmetric matching} that $E(M)$ is matched to itself in the group sense. 
That is, there exists a bijection $f : E(M) \to E(M)$ with $a + f(a) \notin E(M)$ for all $a \in E(M)$. 

Let $\mathcal{M} = \{a_1, \dots, a_n\}$ be a basis of $M$. 
Set $f(a_i) := b_i$ and define $\mathcal{N} = \{b_1, \dots, b_n\}$. 
If $\mathcal{N}$ is a basis of $M$, then $\mathcal{M}$ is matched to $\mathcal{N}$ in the matroid sense and we are done. 
Otherwise, suppose that $\mathcal{N}$ is not a basis of $M$.

\begin{claim}\label{extr}
There exist $x \in E(M) \setminus \mathcal{N}$ and $i \in [n]$ such that $a_i + x \notin E(M)$.
\end{claim}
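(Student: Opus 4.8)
The plan is to argue by contradiction and feed the negation into Lemma~\ref{Kemperman's consequences}. Suppose the statement fails, so that $a_i + x \in E(M)$ for every $i \in [n]$ and every $x \in E(M)\setminus\mathcal{N}$; equivalently,
\[
\mathcal{M} + \bigl(E(M)\setminus\mathcal{N}\bigr)\subseteq E(M).
\]

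I would then invoke Lemma~\ref{Kemperman's consequences} with $A := \mathcal{M}$, $B := E(M)\setminus\mathcal{N}$, and $X := E(M)$. Each hypothesis of that lemma is in place: $A$ is nonempty since $|A| = r(M) = n \ge 1$; $B$ is nonempty because the running assumption is that $M$ is not free, so $|E(M)| > n = |\mathcal{N}|$ (here $\mathcal{N} = f(\mathcal{M})$ has exactly $n$ elements, as $f$ is a bijection and the $a_i$ are distinct); the inclusions $A \subseteq X$ and $B \subseteq X$ are immediate; $A + B \subseteq X$ is precisely the displayed inclusion; and $X = E(M) \subseteq G\setminus\{0\}$ because $0 \notin E(M)$. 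The lemma then yields
\[
|E(M)| = |X| \ \ge\ |A| + |B| + 1 \ =\ |\mathcal{M}| + |E(M)\setminus\mathcal{N}| + 1 \ =\ n + \bigl(|E(M)| - n\bigr) + 1 \ =\ |E(M)| + 1,
\]
which is absurd. Hence there exist $x \in E(M)\setminus\mathcal{N}$ and $i \in [n]$ with $a_i + x \notin E(M)$, as asserted.

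I do not anticipate a genuine obstacle here: once the negation is rewritten as the sumset inclusion $\mathcal{M} + (E(M)\setminus\mathcal{N}) \subseteq E(M)$, the argument is a one-line application of Lemma~\ref{Kemperman's consequences}. The only points worth a word of care are (i) verifying $E(M)\setminus\mathcal{N}\neq\emptyset$, which is exactly where the standing assumption that $M$ is not free is used, and (ii) recording that $|\mathcal{N}| = n$ so that the cardinality count closes. It is worth emphasizing that this argument uses neither that $\mathcal{N}$ fails to be a basis nor the paving hypothesis; those enter only afterwards, when one uses the element $x$ produced here to replace an appropriate $b_i$ and turn $\mathcal{N}$ into an actual basis matched to $\mathcal{M}$.
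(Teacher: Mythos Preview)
Your proof is correct and follows essentially the same route as the paper: argue by contradiction to obtain $\mathcal{M}+(E(M)\setminus\mathcal{N})\subseteq E(M)$, then apply Lemma~\ref{Kemperman's consequences} to reach $|E(M)|\ge |E(M)|+1$. If anything, you are slightly more careful than the paper in explicitly checking that $E(M)\setminus\mathcal{N}\neq\emptyset$ via the standing assumption that $M$ is not free.
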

\begin{proof}[Proof of Claim]
Suppose not. Then 
\[
\mathcal{M} + (E(M) \setminus \mathcal{N}) \subseteq E(M).
\]
Since the three sets $\mathcal{M}$, $E(M)\setminus \mathcal{N}$, and $\mathcal{M} + (E(M)\setminus \mathcal{N})$ are contained in $E(M)$ and $0 \notin E(M)$, 
Lemma~\ref{Kemperman's consequences} yields
\[
|E(M)|
   \;\geq\; |\mathcal{M}| + |E(M)\setminus \mathcal{N}| + 1
   \;=\; |E(M)| + 1,
\]
which is a contradiction.
This completes the proof of the claim.
\end{proof}

    \bigskip
    
    Let $x$ and $i$ be as given by Claim~\ref{extr}. Without loss of generality, assume $i = n$ and define
\[
\mathcal{P} = \{b_1, \dots, b_{n-1}, x\}.
\]

\begin{claim}
$\mathcal{P}$ is a basis of $M$.
\end{claim}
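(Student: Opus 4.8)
The plan is to use the paving structure of $M$: in a paving matroid of rank $n$, every $n$-element subset of the ground set is either a basis or a circuit, so it suffices to rule out that $\mathcal{P}$ is a circuit. First I would verify that $\mathcal{P}$ really has $n$ elements: the $b_1,\dots,b_{n-1}$ are pairwise distinct because $f$ is a bijection and the $a_i$ are distinct, and $x$ differs from each of them because $x\in E(M)\setminus\mathcal{N}$ while $b_1,\dots,b_{n-1}\in\mathcal{N}$. Since every proper subset of $\mathcal{P}$ has at most $n-1$ elements, and hence is independent as $M$ is paving of rank $n$, the set $\mathcal{P}$ is a basis unless it is a circuit.

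So suppose, toward a contradiction, that $\mathcal{P}$ is a circuit. Then $r(\mathcal{P})=n-1$, and since $\{b_1,\dots,b_{n-1}\}\subseteq\mathcal{P}$ is independent of rank $n-1$, the set $H:=\mathrm{cl}(\{b_1,\dots,b_{n-1}\})$ is a hyperplane containing $x$. On the other hand $\mathcal{N}$ is, by hypothesis, not a basis; being an $n$-element subset of the rank-$n$ paving matroid $M$, it is therefore a circuit, which forces $b_n\in\mathrm{cl}(\{b_1,\dots,b_{n-1}\})=H$ as well. Thus $\mathcal{N}\cup\{x\}\subseteq H$, and since $x\notin\mathcal{N}$ we get $|H|\geq n+1$; in particular $\mathrm{null}(H)\geq 2$, so by Corollary~\ref{small nullity} we are outside the sparse paving regime.

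To reach a contradiction I would re-run the additive argument behind Claim~\ref{extr}, but with the hyperplane $H$ in place of $\mathcal{N}$. Since $\mathcal{N}\subseteq H$ we have $E(M)\setminus H\subseteq E(M)\setminus\mathcal{N}$, so the target is a strengthening of Claim~\ref{extr}: produce $x'\in E(M)\setminus H$ and $j\in[n]$ with $a_j+x'\notin E(M)$. Granting this, $(\mathcal{N}\setminus\{b_j\})\cup\{x'\}$ is independent because $x'\notin H=\mathrm{cl}(\mathcal{N}\setminus\{b_j\})$ (the latter closure equals $\mathrm{cl}(\mathcal{N})$ for every $j$, as $\mathcal{N}$ is a circuit), hence it is a basis, and it is matched to $\mathcal{M}$ through the ordering sending $a_j\mapsto x'$ and $a_i\mapsto b_i$ for $i\neq j$ (using $a_i+b_i\notin E(M)$ for $i\neq j$ and $a_j+x'\notin E(M)$). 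Consequently one is free to take $x\notin H$ from the start, contradicting $x\in H$ and finishing the proof.

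The step I expect to be the genuine obstacle is the production of $x'\in E(M)\setminus H$. Feeding $A=\mathcal{M}$ and $B=E(M)\setminus H$ into Lemma~\ref{Kemperman's consequences} --- legitimate since $0\notin E(M)$ and $H\neq E(M)$ --- only yields $|E(M)|\geq|\mathcal{M}|+|E(M)\setminus H|+1$, i.e.\ $|H|\geq n+1$, which is consistent with what we already know and thus not yet a contradiction once $H$ is a large hyperplane; this is precisely the point that separates the paving case from the sparse paving case, where $H=\mathcal{N}$ and Claim~\ref{extr} already suffices. To close this gap I would try to extract a contradiction from $\mathcal{M}+(E(M)\setminus H)\subseteq E(M)$ by iterating the addition of a fixed element of $\mathcal{M}\setminus H$ (nonempty, since $\mathcal{M}$ is a basis) to elements of $E(M)\setminus H$ inside the finite set $E(M)$, invoking $0\notin E(M)$ to forbid the resulting orbit from closing up, in combination with the bound $|H|\geq n+1$; alternatively, one might choose the underlying symmetric group-matching $f$ more carefully, or route the argument through the relaxation of the stressed hyperplanes of $M$ so as to compare with the uniform case.
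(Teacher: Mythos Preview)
Your proposal does not establish the claim; it has a genuine gap at its central step. You correctly deduce that if $\mathcal{P}$ were a circuit then $x$ would lie in the hyperplane $H=\mathrm{cl}(\{b_1,\dots,b_{n-1}\})=\mathrm{cl}(\mathcal{N})$, forcing $|H|\ge n+1$. But from that point you abandon the given $x$ and instead try to manufacture a different $x'\in E(M)\setminus H$ with $a_j+x'\notin E(M)$; this would prove a related but different assertion (that \emph{some} choice in Claim~\ref{extr} yields a basis), not that the fixed $\mathcal{P}$ is one. Even that modified goal you do not reach: you yourself note that Lemma~\ref{Kemperman's consequences} applied to $\mathcal{M}$ and $E(M)\setminus H$ only recovers $|H|\ge n+1$ and gives no contradiction, and the fallback suggestions you list (iterating a translate inside $E(M)$, re-choosing $f$, passing to a relaxation) are left as loose ends. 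Once you leave the sparse paving regime the argument simply stalls.

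The paper's route is a two-line matroidal argument with no further additive input. Assuming $\mathcal{P}$ is not a basis, it sets $H_1=\mathrm{cl}(\mathcal{N})$ and $H_2=\mathrm{cl}(\mathcal{P})$; both are hyperplanes, and $b_1,\dots,b_{n-1}\in H_1\cap H_2$ gives $|H_1\cap H_2|\ge n-1$. Theorem~\ref{(n-1) partitions} then says that the hyperplanes of a rank-$n$ paving matroid form an $(n-1)$-partition of $E(M)$, so any two of them meet in at most $n-2$ elements, and the paper reads this off as the desired contradiction. The tool you missed is precisely this $(n-1)$-partition structure of the hyperplane family; you had the ingredients in hand (you compute that $\mathrm{cl}(\mathcal{N})$ and $\mathrm{cl}(\mathcal{P})$ share $\{b_1,\dots,b_{n-1}\}$) but then reached for additive combinatorics instead of invoking Theorem~\ref{(n-1) partitions}.
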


\begin{proof}[Proof of the Claim]
Suppose not. Since $M$ is a paving matroid, we must then have $r(\mathcal{P}) = n-1$. 
Consider the hyperplanes $H_1 = \mathrm{cl}(\mathcal{N})$ and $H_2 = \mathrm{cl}(\mathcal{P})$. 
By Theorem~\ref{(n-1) partitions}, it follows that
\[
|H_1 \cap H_2| \leq n-2.
\]
However, it is clear that $|H_1 \cap H_2| \geq n-1$, since $b_i \in H_1 \cap H_2$ for all $i \in [n-1]$. 
This contradiction proves that $\mathcal{P}$ is indeed a basis.
\end{proof}

\medskip

Finally, observe that the basis $\mathcal{M}$ is matched to the basis $\mathcal{P}$. 
Hence $M$ is matched to itself, completing the proof.

\end{proof}
\begin{remark}\label{free matroid}
In the proof of Theorem~\ref{paving sym}, if $M$ is a free matroid, then it has a unique basis, namely $E(M)$. 
Since $0 \notin E(M)$, Theorem~\ref{symmetric matching} guarantees that $E(M)$ is matched to itself in the group sense. 
Consequently, the basis $E(M)$ is also matched to itself in the matroid sense. 
Hence $M$ is matched to itself.
\end{remark}
\begin{remark}
The significance of Theorem \ref{paving sym} is that it shows the classical self-matching criterion from abelian groups (Theorem \ref{symmetric matching}) is not confined to sets or to uniform matroids. Even in a paving matroid, where substantial dependence may occur, the only obstruction to symmetric self-matchability remains the trivial additive obstruction \(0 \in E(M)\). Thus, the theorem identifies the paving condition as a structural setting in which matroidal dependence does not create new symmetric matching obstructions. In particular, it shows that the source of self-matchability lies not merely in near-uniformity, but in the hyperplane geometry of paving matroids.
\end{remark}

\begin{example}
We construct a paving matroid over $\mathbb{Z}$ whose ground set does not contain $0$, and then show that it is matched to itself in view of Theorem~\ref{paving sym}.  
To illustrate a case not covered by Theorem~\ref{sparse sym main}, we define a paving matroid that is not sparse paving.

Consider the matroid $M$ of rank $3$ with ground set $E(M)=[5]$ and bases
\[
\mathcal{B}(M)=\bigl\{\{i,j,5\}\mid i,j\in [4],\ i\neq j\bigr\}.
\]
The $3$-circuits are
\[
\mathcal{C}_3(M)\;=\;\big\{\{1,2,3\},\ \{1,2,4\},\ \{1,3,4\},\ \{2,3,4\}\big\}.
\]

Thus $M$ is a paving matroid of rank $3$. However, it is not sparse paving, since the subset $\{1,2,3\}$ is a $3$-subset that is neither a basis nor a hyperplane.  

We use the construction of Theorem \ref{paving sym} to show that $M$ is matched to itself. First, consider the matching
\[
f:[5]\to [5], \qquad 1\mapsto 5,\; 2\mapsto 4,\; 3\mapsto 3,\; 4\mapsto 2,\; 5\mapsto 1,
\]
in the group setting.
\begin{itemize}
    \item Consider the basis $\mathcal{M}=\{1,2,5\}$. Then
    \[
    \mathcal{N}=f(\mathcal{M})=\{5,4,1\},
    \]
    which is a basis of $M$, and $\mathcal{M}$ is matched to $\mathcal{N}$.
    \item Consider the basis $\mathcal{M}=\{1,3,5\}$. Then
    \[
    \mathcal{N}=f(\mathcal{M})=\{5,3,1\},
    \]
    which is a basis of $M$, and $\mathcal{M}$ is matched to $\mathcal{N}$.
    \item Consider the basis $\mathcal{M}=\{1,4,5\}$. Then
    \[
    \mathcal{N}=f(\mathcal{M})=\{5,2,1\},
    \]
    which is a basis of $M$, and $\mathcal{M}$ is matched to $\mathcal{N}$.
    
    \item Consider the basis $\mathcal{M}=\{2,3,5\}$. Then
\[
\mathcal{N}=f(\mathcal{M})=\{4,3,1\},
\]
which is not a basis of $M$. Choose $a_3=5$ and $x=5$ as in Claim \ref{extr}, and form the basis
\[
\mathcal{P}=\{4,3,5\}
\]
of $M$. Then $\mathcal{M}$ is matched to $\mathcal{P}$.

\item Consider the basis $\mathcal{M}=\{2,4,5\}$. Then
\[
\mathcal{N}=f(\mathcal{M})=\{4,2,1\},
\]
which is not a basis of $M$. Choose $a_1=2$ and $x=5$ as in Claim \ref{extr}, and form the basis
\[
\mathcal{P}=\{5,2,1\}
\]
of $M$. Then $\mathcal{M}$ is matched to $\mathcal{P}$.
\item Consider the basis $\mathcal{M}=\{3,4,5\}$. Then
\[
\mathcal{N}=f(\mathcal{M})=\{3,2,1\},
\]
which is not a basis of $M$. Choose $a_2=4$ and $x=5$ as in Claim \ref{extr}, and form the basis
\[
\mathcal{P}=\{3,5,1\}
\]
of $M$. Then $\mathcal{M}$ is matched to $\mathcal{P}$.

\end{itemize}
Since every basis $\mathcal{M}$ of $M$ is matched to a basis of $M$, it follows that $M$ is matched to itself.
\end{example}
Let $E$ be a nonempty set. A \emph{matroid basis system} is a nonempty family $\mathcal{B}\subseteq 2^{E}$ of subsets, called bases, that satisfies the \emph{exchange axiom}: for all distinct $B,B'\in\mathcal{B}$ and every $x\in B\setminus B'$, there exists $y\in B'\setminus B$ such that $(B\setminus\{x\})\cup\{y\}\in\mathcal{B}$. It follows that $\mathcal{B}$ is the set of bases of a matroid (see Kung’s survey \cite{Kung 1}).
\begin{example}
In this example we give a non-paving matroid over an abelian group for which the construction used in the proof of Theorem~\ref{paving sym} breaks down.

Let $G=\mathbb{Z}$ and let
\[
E=\{1,3,7,9\}.
\]
Define a matroid $M$ of rank $3$ on $E$ by declaring its bases to be exactly
\[
\mathcal{B}(M)=\bigl\{\{1,7,9\},\{3,7,9\}\bigr\}.
\]
It is immediate that this is a matroid: the collection $\mathcal{B}(M)$ is nonempty, and the basis exchange axiom is trivially satisfied since the only two bases differ by exchanging $1$ and $3$.

Since the $2$-subset $\{1,3\}$ is contained in no basis, it is dependent. Hence $M$ is not paving.

Now define a matching $f:E\to E$ in the group setting by
\[
f(1)=9,\qquad f(3)=1,\qquad f(7)=3,\qquad f(9)=7.
\]

Consider the basis
\[
\mathcal{M}=\{3,7,9\}.
\]
Following the construction in the proof of Theorem~\ref{paving sym}, set
\[
\mathcal{N}=f(\mathcal{M})=\{1,3,7\}.
\]
Then $\mathcal{N}$ is not a basis of $M$, since $\mathcal{N}\notin \mathcal{B}(M)$.

Now take $a_3=9$ and $x=9\in E\setminus\mathcal{N} $. Since
\[
9+9=18\notin E,
\]
the analogue of Claim~\ref{extr} holds. Replacing the corresponding element $7$ of $\mathcal{N}$ by $x=9$, we obtain
\[
\mathcal{P}=\{1,3,9\}.
\]
Again, $\mathcal{P}$ is not a basis of $M$, since $\mathcal{P}\notin \mathcal{B}(M)$.

Therefore, the replacement step used in the proof of Theorem~\ref{paving sym} may fail for non-paving matroids. This shows that the paving assumption is needed in order to conclude that the modified set is again a basis.
\end{example}
\subsection{Matching Paving Matroids over Abelian Groups (Asymmetric Case)}\label{asym}

The following result about the matchability of sparse paving matroids in the asymmetric situation was proved in \cite{Zerbib0}. 

\begin{theorem}\label{Asy sparse paving}
Let $M$ be a matroid over $G$ and let $N$ be a sparse paving matroid over $G$, both of the same rank $n$, with $0 \notin E(N)$. 
Assume further that one of the following conditions holds:
\begin{enumerate}
    \item $|E(M)| < \min\{|E(N)| - 1,\, p(G)\}$, or
    \item $E(M)$ is not a progression, $G$ is finite, $|E(M)| = |E(N)| - 1$, and $|E(N)| < p(G)$, or
    \item $E(M)$ is neither a progression nor a semi-progression, $G$ is finite, $|E(M)| = |E(N)| < p(G)$, or
    \item $|E(M)| < |E(N)| - n - 1$.
\end{enumerate}
Then $M$ is matched to $N$.
\end{theorem}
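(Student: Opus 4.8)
The plan is to fix an arbitrary ordered basis $\mathcal{M}=(a_1,\dots,a_n)$ of $M$ and, for it, to produce an ordered basis $\mathcal{N}=(b_1,\dots,b_n)$ of $N$ with $a_i+b_i\notin E(M)$ for every $i$. Writing $A_i:=E(N)\setminus(E(M)-a_i)$ for the set of admissible values of $b_i$ --- the requirement $a_i+b_i\notin E(M)$ is precisely $b_i\in A_i$ --- this means producing a system of distinct representatives $(b_i)_{i\in[n]}$ with $b_i\in A_i$ that is independent, hence a basis, in $N$. By Rado's theorem (the matroid generalization of Hall's marriage theorem), such an independent transversal exists if and only if
\[
r_N\!\left(\bigcup_{i\in S}A_i\right)=r_N\bigl(E(N)\setminus D_S\bigr)\ \geq\ |S|\qquad\text{for every nonempty }S\subseteq[n],
\]
where $D_S:=\bigcap_{i\in S}(E(M)-a_i)$. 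The whole proof thus reduces to verifying this rank inequality in each of the four cases.

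The crucial elementary observation is that $0\in D_S$ for every nonempty $S$: since $\mathcal{M}\subseteq E(M)$ we have $a_i\in E(M)$, hence $0\in E(M)-a_i$; as $0\notin E(N)$ by hypothesis, this gives $|D_S\cap E(N)|\leq|D_S|-1$. On the other hand $D_S+\{a_i:i\in S\}\subseteq E(M)$, and in cases (1)--(3) the hypotheses force $|E(M)|<p(G)$, so this nonempty sumset has trivial stabilizer and Kneser's theorem (Theorem~\ref{Kneser}) gives $|D_S|\leq|E(M)|-|S|+1$. Combining the two bounds, $\bigl|\bigcup_{i\in S}A_i\bigr|=|E(N)|-|D_S\cap E(N)|\geq|E(N)|-|E(M)|+|S|$.

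Now bring in the sparse paving structure: by Lemma~\ref{sparse paving nullity}, $\mathrm{null}(\mathcal{H}_N)\leq1$, so no hyperplane of $N$ has more than $n$ elements, whence a subset of $E(N)$ of size $\leq n-1$ has rank equal to its size, one of size $\geq n+1$ has rank $n$, and one of size exactly $n$ has rank $n$ unless it is a circuit-hyperplane (Theorem~\ref{sparse paving alternative}, Theorem~\ref{(n-1) partitions}). Feeding the size estimate into this trichotomy establishes $r_N(\bigcup_{i\in S}A_i)\geq|S|$ for every $S$ in cases (1) and (2), where $|E(N)|-|E(M)|\geq1$ supplies a full unit of slack, and in case (4), where already $\bigl|\bigcup_{i\in S}A_i\bigr|\geq|E(N)|-|E(M)|>n+1$ forces rank $n$ and no additive input is needed at all. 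The only case not closed this way is $|S|=n$ in case (3), where $|E(N)|=|E(M)|$ gives merely $\bigl|\bigcup_{i\in[n]}A_i\bigr|\geq n$, and the argument stalls exactly when $E(N)\setminus D_{[n]}$ is a circuit-hyperplane of $N$ of size exactly $n$.

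In that remaining situation all the inequalities above become equalities; in particular $|D_{[n]}|=|E(M)|-n+1$ and $|D_{[n]}+\mathcal{M}|=|D_{[n]}|+|\mathcal{M}|-1$, so $(D_{[n]},\mathcal{M})$ is a critical pair with $D_{[n]}+\mathcal{M}=E(M)$. Kemperman's critical-pair theorem (Lemma~\ref{Kemperman's critical theorem}) then forces $D_{[n]}$ and $\mathcal{M}$ to be progressions with a common difference, so $E(M)=D_{[n]}+\mathcal{M}$ is itself a progression, contradicting the hypothesis of (3); if a less sharp accounting only gives $D_{[n]}+\mathcal{M}=E(M)\setminus\{w\}$ for one point $w$, then $E(M)$ is a semi-progression, which is also barred in (3). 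Degenerate sub-cases ($n=1$; $|D_{[n]}|\leq1$, which forces $N$ free and hence circuit-hyperplane-free; $M$ free) are handled by direct inspection. I expect the real difficulty to be concentrated here --- recognizing that the sole obstruction to Rado's condition is a (semi-)progression structure on $E(M)$, and extracting it from a single tight instance of Kneser's inequality via Kemperman's theorem --- with a secondary subtlety at the boundary $|E(M)|=p(G)-1$ of Kemperman's hypothesis $|A|+|B|-1\leq p(G)-2$, where one must sharpen the count by one or invoke the structure of critical pairs whose sumset is a minimal-subgroup coset with a point removed, such a set being $p(G)-1$ consecutive terms of a progression and hence again a progression.
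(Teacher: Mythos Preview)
This theorem is not proved in the present paper; it is quoted from \cite{Zerbib0} and used as a black box (indeed, the proof of Theorem~\ref{Asy pav} dispatches the case $t=1$ by invoking it directly). The closest in-paper comparison is therefore the proof of Theorem~\ref{Asy pav}, whose method is quite different from yours: there one first produces a group-theoretic matching $f:E(M)\to A\subseteq E(N)$ via Theorem~\ref{matchable sets, p(G)}, takes $\mathcal{N}=f(\mathcal{M})$, and if $\mathcal{N}$ is dependent performs explicit element swaps $b_{i_j}\mapsto x_j$ using Kneser/Kemperman to locate admissible $x_j$'s, then argues via Theorem~\ref{(n-1) partitions} that one of the swapped sets is a basis. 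Your approach instead reduces everything to a single application of Rado's independent-transversal theorem and verifies the rank condition directly. This is conceptually cleaner and, as your own accounting shows, apparently does not need the ``not a progression'' hypothesis in case~(2) at all --- a genuine streamlining relative to the stated theorem.

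That said, your handling of case~(3) is incomplete in two respects. First, the boundary $|E(M)|=p(G)-1$: here Lemma~\ref{Kemperman's critical theorem} requires $|D_{[n]}|+|\mathcal{M}|-1\le p(G)-2$, which fails by exactly one. You acknowledge this but the suggested fix --- that $E(M)$ would then be a coset of a minimal subgroup with a point removed, hence a progression --- presupposes that $E(M)$ sits inside a single $H$-coset, which nothing in your argument establishes (the stabilizer of the sumset being trivial does not force this). Closing this gap likely requires the full Kemperman structure theorem rather than the simplified Lemma~\ref{Kemperman's critical theorem}, or a separate argument. Second, the semi-progression hypothesis of case~(3) plays no clear role in your scheme: when all inequalities are tight you get $D_{[n]}+\mathcal{M}=E(M)$ exactly, so Kemperman (when it applies) yields that $E(M)$ is a genuine progression, not merely a semi-progression. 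Your remark about ``a less sharp accounting'' giving $E(M)\setminus\{w\}$ does not arise from the actual chain of inequalities. Either the hypothesis is superfluous in your framework, or it is needed precisely to handle the $p(G)-1$ boundary --- but you have not shown which, and until that is settled the proof of case~(3) remains a sketch rather than an argument.
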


\medskip

We now extend this result to general paving matroids by incorporating the hyperplane-nullity parameter. 
In particular, we generalize the sparse paving case ($t \leq 1$) to arbitrary paving matroids, 
with explicit dependence on the parameter~$t$.

\begin{theorem}\label{Asy pav}
Let $M$ be a matroid over $G$ and let $N$ be a paving matroid over $G$, both of the same rank $n$, with $0 \notin E(N)$. 
Let $t = \mathrm{null}(\mathcal{H}_N)$ with $0 < t \leq n$. 
Assume further that one of the following conditions holds:
\begin{enumerate}
    \item $|E(M)| < \min\{|E(N)| - 2t + 1,\, p(G)\}$, or
    \item $E(M)$ is not a progression, $G$ is finite, $|E(M)| = |E(N)| - 2t + 1$, and $|E(N)| < p(G)$, or
    \item $E(M)$ is neither a progression nor a semi-progression, $G$ is finite, $|E(M)| = |E(N)| - 2t + 2$, and $|E(N)| < p(G)$, or
    \item $|E(M)| < |E(N)| - n - 1$.
\end{enumerate}
Then $M$ is matched to $N$.
\end{theorem}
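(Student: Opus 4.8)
The plan is to fix an arbitrary basis $\mathcal{M}=\{a_1,\dots,a_n\}$ of $M$ and to produce a basis $\mathcal{N}=\{b_1,\dots,b_n\}$ of $N$ with $a_i+b_i\notin E(M)$ for every $i$; as $\mathcal{M}$ ranges over all bases of $M$ this yields that $M$ is matched to $N$. For $i\in[n]$ I set $S_i=\{b\in E(N):a_i+b\notin E(M)\}=E(N)\setminus(E(M)-a_i)$, so $|S_i|\ge|E(N)|-|E(M)|$; for $J\subseteq[n]$ I write $\mathcal{M}_J=\{a_i:i\in J\}$ and record that the complement $T_J:=E(N)\setminus\bigcup_{i\in J}S_i=\{b\in E(N):b+\mathcal{M}_J\subseteq E(M)\}$ satisfies $T_J+\mathcal{M}_J\subseteq E(M)$. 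The first step is to observe that a transversal $b_i\in S_i$ that is moreover a basis of $N$ exists — by Rado's theorem, the matroid form of Hall's marriage theorem — precisely when $r_N\bigl(\bigcup_{i\in J}S_i\bigr)\ge|J|$ for all $J\subseteq[n]$; equivalently, one may start from an arbitrary transversal and correct it by single swaps governed by the $(n-1)$-partition property (Theorem~\ref{(n-1) partitions}), exactly as in the proof of Theorem~\ref{paving sym}. For $1\le|J|\le n-1$ the paving hypothesis gives $r_N(X)\ge\min(|X|,n-1)$, so it is enough that $\bigl|\bigcup_{i\in J}S_i\bigr|=|E(N)|-|T_J|\ge|J|$; this is immediate when $T_J=\varnothing$, and otherwise I would bound $|T_J|$ from $T_J+\mathcal{M}_J\subseteq E(M)$ by Kneser's theorem (Theorem~\ref{Kneser}), sharpened by adjoining $0$ (possible since $T_J\subseteq E(N)\subseteq G\setminus\{0\}$) in the manner of Lemma~\ref{Kemperman's consequences}. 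The size hypotheses of (1), (2), (4) then give what is needed; in the tight instance of (3), Lemma~\ref{Kemperman's critical theorem} would force $T_J$ and $\mathcal{M}_J$ to be arithmetic progressions with a common difference, hence $E(M)\supseteq T_J+\mathcal{M}_J$ to be a progression, contradicting the hypothesis on $E(M)$.

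The heart of the matter is the case $J=[n]$: I must show $\bigcup_{i=1}^nS_i$ is not contained in any hyperplane of $N$, which forces $r_N\bigl(\bigcup_iS_i\bigr)=n$ and completes Rado's criterion. Suppose instead $\bigcup_iS_i\subseteq H$ for some hyperplane $H$ of $N$. Then $E(N)\setminus H\subseteq T_{[n]}$, so
\[
(E(N)\setminus H)+\mathcal{M}\subseteq E(M),
\]
and $E(N)\setminus H$ is a nonempty subset of $G\setminus\{0\}$. Since $r_N(H)=n-1$ and $\mathrm{null}(H)\le t$, one has $|H|\le n-1+t$, hence $|E(N)\setminus H|\ge|E(N)|-(n-1+t)$; writing $K$ for the stabilizer of $(E(N)\setminus H)+\mathcal{M}$, Kneser's theorem gives
\[
|E(M)|\;\ge\;|E(N)\setminus H|+n-|K|\;\ge\;|E(N)|+1-t-|K|.
\]

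In cases (1)--(3) one has $|E(M)|<p(G)$, so $|K|\le|E(M)|<p(G)$ forces $K$ to be trivial, and therefore $|E(M)|\ge|E(N)|-t$. This contradicts (1), since $|E(M)|<|E(N)|-2t+1\le|E(N)|-t$ for $t\ge1$; it contradicts (2) and (3) outright once $t\ge2$, respectively $t\ge3$; and for the remaining small values of $t$ the bound $|E(M)|\ge|E(N)|-t$ is attained up to bounded slack, so $(E(N)\setminus H,\mathcal{M})$ is a (near-)critical pair and Lemma~\ref{Kemperman's critical theorem} makes both sets arithmetic progressions with a common difference, whence their sumset is a progression contained in $E(M)$ of size $|E(M)|$ or $|E(M)|-1$ — impossible since $E(M)$ is neither a progression nor a semi-progression. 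In case (4), $|E(N)|-|E(M)|>n+1$: if $K$ is trivial then $|E(M)|\ge|E(N)|-t\ge|E(N)|-n>|E(M)|$, a contradiction; and the residual nontrivial-stabilizer case is handled by passing to $G/K$ and applying Kneser's theorem there (with $K$ the full stabilizer), after noting that $\bigcup_iS_i\subseteq H$ already forces $|H|\ge|S_i|>|E(N)|-|E(M)|$. In all cases a contradiction is reached, so $r_N\bigl(\bigcup_iS_i\bigr)=n$, Rado's theorem supplies the required basis $\mathcal{N}$, and $M$ is matched to $N$.

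The hard part will be the case $J=[n]$, and inside it three points demand care: extracting the arithmetic-progression structure in the boundary instances of (2) and (3) while verifying the hypotheses of Lemma~\ref{Kemperman's critical theorem} (notably the bound on $|E(M)|$ against $p(G)$, which must be squeezed out of $|E(N)|<p(G)$); the nontrivial-stabilizer subcase of (4), where no $p(G)$ ceiling is available and one must instead exploit the largeness of $H$ and reduce modulo $K$; and the routine but unavoidable bookkeeping that keeps the Kneser and Kemperman estimates valid irrespective of whether $0\in E(M)$.
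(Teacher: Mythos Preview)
Your Rado-based framework is a genuinely different route from the paper's. The paper proceeds constructively: for cases (1)--(3) it first produces a group matching $f\colon E(M)\to A\subseteq E(N)$ via Theorem~\ref{matchable sets, p(G)}, sets $\mathcal{N}=f(\mathcal{M})$, and if $\mathcal{N}$ fails to be a basis it finds $t$ successive swap elements $x_1,\dots,x_t$ by an inductive Kneser/Kemperman argument (Claim~\ref{ant}), then uses the $(n-1)$-partition property together with the hyperplane-nullity bound to show that some intermediate swap $\mathcal{N}_i$ is a basis. For case (4) the paper drops Kneser altogether and uses a bare greedy construction plus a single swap, relying only on the trivial translate bound $|a_i+(E(N)\setminus\mathcal{N})|\le |E(M)|$. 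Your approach instead checks Rado's rank condition $r_N\bigl(\bigcup_{i\in J}S_i\bigr)\ge|J|$ directly; this is conceptually cleaner and unifies the four cases, and for $|J|\le n-1$ your adjoin-$0$ Kneser estimate is neat.

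There is, however, a real gap in your treatment of $J=[n]$ under hypothesis (4). When $\bigcup_iS_i\subseteq H$ and the stabilizer $K$ of $(E(N)\setminus H)+\mathcal{M}$ is nontrivial, ``passing to $G/K$'' does not produce a contradiction: the images of $E(N)\setminus H$ and $\mathcal{M}$ may collapse in the quotient, and without the ceiling $|E(M)|<p(G)$ you have no control on $|K|$. Kneser only yields $|E(M)|\ge |E(N)|+1-t-|K|$, which is vacuous once $|K|$ is large, and your observation that $|H|>|E(N)|-|E(M)|$ merely gives $n+1<|H|\le n-1+t$, consistent for all $t\ge3$. The paper avoids this entirely because its case-(4) argument never invokes Kneser: the single-swap step needs only $|E(N)\setminus\mathcal{N}|\le|E(M)|$, so no stabilizer enters. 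You could repair your argument here by abandoning the direct Rado verification for $J=[n]$ and instead deducing it from the swap construction (which you mention in passing as ``equivalent''), but as written the quotient sketch does not close the case.

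A smaller issue: in case (3) with $t=1$ your critical-pair appeal at $J=[n]$ needs $|A|+|B|-1\le p(G)-2$ in Lemma~\ref{Kemperman's critical theorem}, but after adjoining $0$ you obtain only $|E(N)|\le p(G)-1$, off by one when $|E(N)|=p(G)-1$. The paper does not face this because it disposes of $t=1$ at the outset by citing the sparse-paving Theorem~\ref{Asy sparse paving}; your self-contained route would need a separate patch here.
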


\begin{proof}[Proof of Theorem \ref{Asy pav} (1)-(3)]

We begin by addressing the following three special cases:

\begin{itemize}
    \item $M$ is a free matroid: In this case, $\mathcal{M} = E(M)$ is the unique basis of $M$. 
    Let $\mathcal{N}$ be a basis of $N$. 
    By Theorem~\ref{matchable sets, p(G)}, $\mathcal{M}$ is matched to $\mathcal{N}$ in the group-theoretic sense; 
    that is, there exists a bijection $f: \mathcal{M} \to \mathcal{N}$ such that $a + f(a) \notin \mathcal{M}$ for all $a \in \mathcal{M}$. 
    This implies that $\mathcal{M}$ is also matched to $\mathcal{N}$ in the matroid sense. 
    Hence, $M$ is matched to $N$.

    \item \textbf{$t = 1$:} By Lemma~\ref{sparse paving nullity}, $N$ is a sparse paving matroid. 
    Then, by Theorem~\ref{Asy sparse paving}, $M$ is matched to $N$.

    \item \textbf{$n = 1$:}  
    Let $\mathcal{M} = \{a\}$ be a basis of $M$. 
    Since $|E(M)| < |E(N)|$, there exists an element $b \in E(N)$ such that $a + b \notin E(M)$. 
    Define $\mathcal{N} = \{b\}$. 
    As $N$ is loopless, $\mathcal{N}$ is a basis of $N$. 
    Thus, $\mathcal{M}$ is matched to $\mathcal{N}$, and therefore $M$ is matched to $N$.
\end{itemize}

We now assume that $M$ is not a free matroid, that $t > 1$, and that $n > 1$. 
Suppose that one of the conditions (1)-(3) holds. 
Then $|E(M)| < p(G)$. 
Choose a subset $A \subseteq E(N)$ such that $|A| = |E(M)|$. 
Since $|A| = |E(M)| < p(G)$, Theorem~\ref{matchable sets, p(G)} guarantees the existence of a matching 
$f : E(M) \to A$ in the group-theoretic sense.  

Let $\mathcal{M} = \{a_1, \ldots, a_n\}$ be a basis of $M$, and define $b_i := f(a_i)$ for all $i\in [n]$. 
Set $\mathcal{N} = \{b_1, \ldots, b_n\} \subseteq E(N)$. 
Since $N$ is a paving matroid of rank $n$, the set $\mathcal{N}$ is either a basis or a circuit.  

If $\mathcal{N}$ is a basis, then $\mathcal{M}$ is matched to $\mathcal{N}$ in the matroid sense, and hence $M$ is matched to $N$, completing the proof in this case. 
If instead $\mathcal{N}$ is a circuit, we proceed by proving the following claim.

\begin{claim}\label{ant}
For every integer $s$ with $s\in [t]$, there exist distinct elements
$x_1,\dots,x_s\in E(N)\setminus \mathcal N$
and distinct indices $i_1,\dots,i_s\in [n]$ such that
\[
a_{i_j}+x_j\notin E(M)\qquad\text{for every }j\in [s].
\]
In particular, the conclusion holds for $s=t$.
\end{claim}

\begin{proof}[Proof of the Claim]
We proceed by induction on $s$.

\medskip
\noindent\textit{Base case ($s = 1$).} 
We show that there exist $x \in E(N) \setminus \mathcal{N}$ and $a \in \mathcal{M}$ such that $a + x \notin E(M)$. 
Suppose, for contradiction, that no such pair $(a, x)$ exists. 
Then
\[
\mathcal{M} + (E(N) \setminus \mathcal{N}) \subseteq E(M).
\]

Applying Theorem~\ref{Kneser} to the sets $\mathcal{M}$ and $E(N) \setminus \mathcal{N}$, we obtain a subgroup $H \leq G$ such that
\begin{align}
\label{eq10}
|\mathcal{M} + (E(N) \setminus \mathcal{N})| &\geq |\mathcal{M}| + |E(N) \setminus \mathcal{N}| - |H|, \\
\label{eq11}
\mathcal{M} + (E(N) \setminus \mathcal{N}) + H &= \mathcal{M} + (E(N) \setminus \mathcal{N}).
\end{align}

Since $\mathcal{M} + (E(N) \setminus \mathcal{N}) \subseteq E(M)$, it follows from \eqref{eq11} that
\[
|H| \leq |\mathcal{M} + (E(N) \setminus \mathcal{N}) + H| 
= |\mathcal{M} + (E(N) \setminus \mathcal{N})| 
\leq |E(M)| < p(G),
\]
so $H = \{0\}$. 
Substituting this into \eqref{eq10} yields
\begin{align}\label{eq14}
|E(M)| \;\geq\; |\mathcal{M}| + |E(N) \setminus \mathcal{N}| - 1 
= n + (|E(N)| - n) - 1 
= |E(N)| - 1 
> |E(N)| - 2t + 1.
\end{align}
We now consider three separate cases corresponding to conditions (1), (2), and (3) of the theorem.

\medskip
\noindent\textit{Case (1).} 
The condition 
\[
|E(M)| < \min\{|E(N)| - 2t + 1,\, p(G)\}
\] 
directly contradicts inequality~\eqref{eq14}.

\medskip

\noindent\textit{Case (2).} 
The assumption $|E(M)| = |E(N)| - 2t + 1$ directly contradicts inequality~\eqref{eq14}.

\medskip
\noindent\textit{Case (3).} 
Assume that $G$ is finite. 
Combining the assumption $|E(M)| = |E(N)| - 2t + 2$ with inequality~\eqref{eq14}, we obtain
\[
|E(M)| \;\geq\; |\mathcal{M} + (E(N) \setminus \mathcal{N})| 
   \;\geq\; |\mathcal{M}| + |E(N) \setminus \mathcal{N}| - 1 
   \;\geq\; |E(N)| - 2t + 2
   =|E(M)|.
\]
Therefore, 
\[
|\mathcal{M}| + |E(N) \setminus \mathcal{N}| - 1 \;=\; |\mathcal{M} + (E(N) \setminus \mathcal{N})| .
\]

The pair $(\mathcal{M},\, E(N) \setminus \mathcal{N})$ is critical and satisfies the conditions of Theorem~\ref{Kemperman's critical theorem} because:
\begin{itemize}
    \item Since $n>1$, we have $|\mathcal{M}| > 1$.
    \item $|E(N) \setminus \mathcal{N}| = |E(N)| - n > 1$.
    \item $|\mathcal{M}| + |E(N) \setminus \mathcal{N}| - 1 = |E(N)| - 1 \leq p(G) - 2$.
\end{itemize}

Hence, both sets are progressions with the same common difference. 
Since
\[
E(M) = \mathcal{M} + (E(N) \setminus \mathcal{N}),
\]
it follows that $E(M)$ is also a progression, a contradiction.

\noindent\textit{Inductive hypothesis.} 
Assume that distinct elements $x_1, \dots, x_{s-1} \in E(N) \setminus \mathcal{N}$ 
and distinct indices $i_1, \dots, i_{s-1} \in [n]$ have been chosen such that
\[
a_{i_j} + x_j \notin E(M) \quad \text{for every } j \in [s-1].
\]

\medskip
\noindent\textit{Inductive step.} 
We show that there exists
\[
x_s \in E(N) \setminus (\mathcal{N} \cup \{x_1, \dots, x_{s-1}\}) 
\quad \text{and} \quad 
i_s \in [n] \setminus \{i_1, \dots, i_{s-1}\}
\]
such that
\[
a_{i_s} + x_s \notin E(M).
\]

Suppose, for contradiction, that no such pair exists. 
Then
\begin{align}\label{eq12}
(E(N) \setminus (\mathcal{N} \cup \{x_1, \dots, x_{s-1}\})) 
+ (\mathcal{M} \setminus \{a_{i_1}, \dots, a_{i_{s-1}}\}) 
\subseteq E(M).
\end{align}

Applying Theorem~\ref{Kneser} to the sets 
$\mathcal{M} \setminus \{a_{i_1}, \dots, a_{i_{s-1}}\}$ 
and $E(N) \setminus (\mathcal{N} \cup \{x_1, \dots, x_{s-1}\})$, 
we obtain a subgroup $H \leq G$ such that
\begin{equation}\label{eq13}
\begin{aligned}
&\Big| (\mathcal{M} \setminus \{a_{i_1}, \ldots, a_{i_{s-1}}\}) 
+ (E(N) \setminus (\mathcal{N} \cup \{x_1, \ldots, x_{s-1}\})) \Big| \\
&\qquad \geq 
|\mathcal{M} \setminus \{a_{i_1}, \ldots, a_{i_{s-1}}\}| 
+ |E(N) \setminus (\mathcal{N} \cup \{x_1, \ldots, x_{s-1}\})| - |H|, \\[6pt]
&(\mathcal{M} \setminus \{a_{i_1}, \ldots, a_{i_{s-1}}\}) 
+ (E(N) \setminus (\mathcal{N} \cup \{x_1, \ldots, x_{s-1}\})) + H \\
&\qquad = 
(\mathcal{M} \setminus \{a_{i_1}, \ldots, a_{i_{s-1}}\}) 
+ (E(N) \setminus (\mathcal{N} \cup \{x_1, \ldots, x_{s-1}\})).
\end{aligned}
\end{equation}

In a similar manner as in the base case, we may argue that $H = \{0\}$. 
Combining this with inequalities~\eqref{eq12} and~\eqref{eq13}, we conclude that
\begin{equation} \label{eq16}
\begin{aligned}
|E(M)| &\geq 
\big| (\mathcal{M} \setminus \{a_{i_1}, \ldots, a_{i_{s-1}}\}) 
   + (E(N) \setminus (\mathcal{N} \cup \{x_1, \ldots, x_{s-1}\})) \big| \\
&\geq 
|\mathcal{M} \setminus \{a_{i_1}, \ldots, a_{i_{s-1}}\}| 
+ |E(N) \setminus (\mathcal{N} \cup \{x_1, \ldots, x_{s-1}\})| - 1 \\
&=|E(N)|- 2s + 1 \geq |E(N)| - 2t + 1.
\end{aligned}
\end{equation}

\medskip
We now consider three separate cases corresponding to conditions (1), (2), and (3) of the theorem.

\medskip
\noindent\textit{Case (1).} 
The condition 
\[
|E(M)| < \min\{|E(N)| - 2t + 1,\, p(G)\}
\] 
contradicts inequality~\eqref{eq16}.

\medskip
\noindent\textit{Case (2).} 
Assume $|E(M)| = |E(N)| - 2t + 1$. 
From~\eqref{eq16} we conclude that
\begin{multline*}
\Big| 
  (\mathcal{M} \setminus \{a_{i_1}, \ldots, a_{i_{s-1}}\}) 
  + \big(E(N) \setminus (\mathcal{N} \cup \{x_1, \ldots, x_{s-1}\})\big)
\Big| \\
= 
\big|\mathcal{M} \setminus \{a_{i_1}, \ldots, a_{i_{s-1}}\}\big|
+ \big|E(N) \setminus (\mathcal{N} \cup \{x_1, \ldots, x_{s-1}\})\big|
- 1.
\end{multline*}
Therefore 
\[
\big(\mathcal{M} \setminus \{a_{i_1}, \ldots, a_{i_{s-1}}\},\,
   E(N) \setminus (\mathcal{N} \cup \{x_1, \ldots, x_{s-1}\}) \big)
\]
is a critical pair satisfying the conditions of Theorem~\ref{Kemperman's critical theorem}, since:
\begin{itemize}
    \item $|\mathcal{M} \setminus \{a_{i_1}, \ldots, a_{i_{s-1}}\}| =n-(s-1)\geq n - (t-1) > 1$;
    \item As $M$ is not a free matroid and $t>1$, we have
\begin{align*}
|E(N) \setminus (\mathcal{N} \cup \{x_1, \ldots, x_{s-1}\})|
&= |E(N)|-(n+s-1) \\
&\ge |E(N)|-(n+t-1) \\
&= |E(M)|+2t-1-(n+t-1) \\
&> t>1.
\end{align*}
    \item 
   \[
\begin{aligned}
|\mathcal{M} \setminus \{a_{i_1}, \ldots, a_{i_{s-1}}\}|
&+ |E(N) \setminus (\mathcal{N} \cup \{x_1, \ldots, x_{s-1}\})| - 1 \\
&= |E(M)| \\
&\le |E(N)| - 1 \\
&\le p(G) - 2.
\end{aligned}
\]
\end{itemize}
By Theorem~\ref{Kemperman's critical theorem}, both sets are progressions with the same common difference. 
Thus
\[
E(M) = 
(\mathcal{M} \setminus \{a_{i_1}, \ldots, a_{i_{s-1}}\}) 
+ \big(E(N) \setminus (\mathcal{N} \cup \{x_1, \ldots, x_{s-1}\})\big)
\]
is itself a progression, contradicting the assumption that $E(M)$ is not a progression.

\medskip
\noindent\textit{Case (3).} 
Assume $|E(M)| = |E(N)| - 2t + 2$. 
From~\eqref{eq16} it follows that one of the following must hold:
\[
|\mathcal{M} \setminus \{a_{i_1}, \ldots, a_{i_{s-1}}\}| 
+ |E(N) \setminus (\mathcal{N} \cup \{x_1, \ldots, x_{s-1}\})| - 1 = |E(M)|,
\]
or
\[
|\mathcal{M} \setminus \{a_{i_1}, \ldots, a_{i_{s-1}}\}| 
+ |E(N) \setminus (\mathcal{N} \cup \{x_1, \ldots, x_{s-1}\})| - 1 = |E(M)| - 1.
\]
We now split into two subcases:

\begin{enumerate}
    \item If 
    \[
    \big|\mathcal{M} \setminus \{a_{i_1}, \ldots, a_{i_{s-1}}\}\big|
    + \big|E(N) \setminus (\mathcal{N} \cup \{x_1, \ldots, x_{s-1}\})\big| - 1 
    = |E(M)|,
    \]
    then, as in Case~(2), the pair 
    \[
    \big(\mathcal{M} \setminus \{a_{i_1}, \ldots, a_{i_{s-1}}\},\,
    E(N) \setminus (\mathcal{N} \cup \{x_1, \ldots, x_{s-1}\})\big)
    \]
    is critical and satisfies the conditions of Theorem~\ref{Kemperman's critical theorem}. 
    Hence both $\mathcal{M} \setminus \{a_{i_1}, \ldots, a_{i_{s-1}}\}$ and $E(N) \setminus (\mathcal{N} \cup \{x_1, \ldots, x_{s-1}\})$ are progressions with the same common difference. 
    Since
    \[
    E(M) = 
    (\mathcal{M} \setminus \{a_{i_1}, \ldots, a_{i_{s-1}}\}) 
    + (E(N) \setminus (\mathcal{N} \cup \{x_1, \ldots, x_{s-1}\})),
    \]
    it follows that $E(M)$ is itself a progression, a contradiction.

    \item If 
    \[
    \big|\mathcal{M} \setminus \{a_{i_1}, \ldots, a_{i_{s-1}}\}\big|
    + \big|E(N) \setminus (\mathcal{N} \cup \{x_1, \ldots, x_{s-1}\})\big| - 1 
    = |E(M)| - 1,
    \]
    then $|E(M)| = |E(N)|$. 
    On the other hand, we also know that $|E(M)| = |E(N)| - 2t + 2$, which forces $t = 1$, a contradiction.
\end{enumerate}

\bigskip

Thus the proof of Claim~\ref{ant} is complete. 
\end{proof}

Now let $x_1, \dots, x_t$ and $i_1, \dots, i_t$ be obtained as in Claim~\ref{ant}. 
Without loss of generality, we may assume that $i_j = j$ for every $j \in [t]$. 
That is, $a_j + x_j \notin E(M)$ for all $j \in [t]$. 
Set $X = \{x_1, \dots, x_t\}$ and $\mathcal{P} = X \cup \mathcal{N}$.

\begin{claim}
$r(\mathcal{P}) = n$.
\end{claim}

\begin{proof}[Proof of the Claim]
Assume for contradiction that $r(\mathcal{P}) \neq n$. 
Since $N$ is a paving matroid and $|\mathcal{P}| > n$, it follows that $r(\mathcal{P}) = n - 1$. 
Let $H = \mathrm{cl}(\mathcal{P})$. 
Then $H$ is a hyperplane, and we have
\[
t = \mathrm{null}(\mathcal{H}_N) 
   \geq \mathrm{null}(H) 
   = |H| - (n-1) 
   \geq |\mathcal{P}| - (n-1) 
   = (n+t) - (n-1) = t+1,
\]
a contradiction. 
Hence $r(\mathcal{P}) = n$.
\end{proof}

\medskip

We now define $\mathcal{N}_0 = \mathcal{N}$ and consider the following $n$-subsets of $E(N)$:
\[
\mathcal{N}_i = \{x_1, \dots, x_i, b_{i+1}, \dots, b_n\},
\quad \text{for all } i \in [t].
\]
\begin{claim}\label{existence of basis sym}
$\mathcal{N}_i$ is a basis of $N$ for some $i \in [t]$.
\end{claim} 
\begin{proof}[Proof of the Claim]

Suppose, for contradiction, that every $\mathcal{N}_i$ is not a basis. 
Since $N$ is a paving matroid, this means $r(\mathcal{N}_i) = n-1$. 
Set $H_i = \mathrm{cl}(\mathcal{N}_i)$ for $0 \leq i \leq t$. 
Clearly, each $H_i$ is a hyperplane of $N$. 
On the other hand,
\[
|H_i \cap H_{i+1}| \;\geq\; |\mathcal{N}_i \cap \mathcal{N}_{i+1}| = n-1.
\]
Invoking Theorem~\ref{(n-1) partitions}, we conclude that $H_i = H_{i+1}$ for all $0 \leq i \leq t-1$. 
Thus,
\[
\mathcal{P} \subseteq \bigcup_{i=0}^{t} H_i = H_0,
\]
implying $r(\mathcal{P}) \leq r(H_0) = n-1$. 
This contradicts the fact that $r(\mathcal{P}) = n$. 
Hence, $\mathcal{N}_i$ is a basis of $N$ for some $i \in [t]$.

\end{proof}
\medskip

Assume that $i \in [t]$ is obtained as in Claim~\ref{existence of basis sym}. 
Then $\mathcal{M}$ is matched to $\mathcal{N}_i$ via the assignment
\[
a_j \mapsto x_j \quad \text{for } 1 \leq j \leq i,
\qquad
a_k \mapsto b_k \quad \text{for } i+1 \leq k \leq n.
\]
Therefore $M$ is matched to $N$.
\end{proof}

\begin{remark}
    It is worth noting that, although the conditions in Theorem \ref{Asy pav}-(3), such as $E(M)$ not being a semi-progression may seem unused in the proof of Theorem \ref{Asy pav}, they are in fact applied in the special case $t=1$. In this instance, we invoke Theorem \ref{Asy sparse paving}, where this condition plays a role.

\end{remark}

Next, we complete the proof of Theorem \ref{Asy pav}-(4). In this case, the condition 
\(|E(N)| < p(G)\) is not imposed. Hence, unlike parts (1)--(3), where Theorem \ref{matchable sets, p(G)} 
is applicable, we cannot invoke it here. To establish the existence of matchings in the group sense 
(and subsequently in the matroid sense), we instead employ elementary techniques from additive number 
theory, followed by standard matroidal arguments.

\begin{proof}[Proof of Theorem \ref{Asy pav} (4)]
Let $\mathcal{M}=\{a_1,\cdots, a_n\}$ be a basis for $M$. We show that $\mathcal{M}$ is matched to a basis of $N$.

\begin{claim}
There exist $n$ distinct elements $b_1,b_2,\dots,b_n \in E(N)$ such that  
\[
a_i+b_i \notin E(M) \quad \text{for all } i \in [n].
\]
\end{claim}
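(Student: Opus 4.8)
The claim asserts that for a fixed basis $\mathcal{M}=\{a_1,\dots,a_n\}$ of $M$, there is a system of distinct representatives $b_1,\dots,b_n\in E(N)$ with $a_i+b_i\notin E(M)$ for every $i$. This is a bipartite matching statement, so the natural tool is Hall's marriage theorem applied to the bipartite graph on $\mathcal{M}\sqcup E(N)$ in which $a_i$ is joined to $b\in E(N)$ precisely when $a_i+b\notin E(M)$. We want a matching saturating $\mathcal{M}$, so it suffices to verify Hall's condition: for every $I\subseteq[n]$, the neighborhood $\bigl|\bigcup_{i\in I}\{b\in E(N): a_i+b\notin E(M)\}\bigr|\geq|I|$.

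**Verifying Hall's condition.** Fix $\emptyset\neq I\subseteq[n]$ and let $\mathcal{M}_I=\{a_i:i\in I\}$, so $|\mathcal{M}_I|=|I|=:k$. The set of $b\in E(N)$ that are \emph{not} neighbors of any $a_i$ with $i\in I$ is exactly $\{b\in E(N): a_i+b\in E(M)\text{ for all }i\in I\}$, which is contained in $\{b\in G: \mathcal{M}_I+b\subseteq E(M)\}$. The key observation is that this last set has size at most $|E(M)|-(k-1)$: indeed, if $B=\{b\in G:\mathcal{M}_I+b\subseteq E(M)\}$ is nonempty, then $\mathcal{M}_I+B\subseteq E(M)$, and since $\mathcal{M}_I$ and $B$ are finite nonempty subsets of $G$, Theorem~\ref{Kemperman's theorem}-type reasoning (or directly Theorem~\ref{Kneser} with the observation that a translate of $\mathcal{M}_I$ is a single point when $k=1$, and for $k\geq 1$ one has $|\mathcal{M}_I+B|\geq |\mathcal{M}_I|+|B|-|H|\geq k+|B|-|H|$ for the stabilizer $H$ of $\mathcal{M}_I+B$) forces $|B|\leq |E(M)|-k+|H|$. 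Here $H\leq G$ and $H$ stabilizes a subset of $E(M)$, but we are in the regime $|E(M)|<|E(N)|-n-1<|E(N)|$ with no bound $|E(N)|<p(G)$, so I cannot immediately conclude $H=\{0\}$; instead I note $\mathcal{M}_I+B+H=\mathcal{M}_I+B\subseteq E(M)$ implies $|H|\leq|E(M)|$, which is not yet enough. The cleaner route: apply Theorem~\ref{Kneser} and argue that $H$ being a subgroup with $\mathcal{M}_I+B$ a union of $H$-cosets inside $E(M)$ gives $|\mathcal{M}_I+B|\geq k+|B|-|H|$, and then the number of non-neighbors is $|B|\leq |\mathcal{M}_I+B|-k+|H|\leq |E(M)|-k+|H|$; since a genuine obstruction requires this count to exceed $|E(N)|-k$, we would need $|E(N)|\leq|E(M)|+|H|$, and absorbing $|H|$ requires more care. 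A simpler bound avoiding the stabilizer: the number of non-neighbors of $\mathcal{M}_I$ is at most $|E(M)|$ trivially (each non-neighbor $b$ satisfies $a_i+b\in E(M)$, and the map $b\mapsto a_{i_0}+b$ for a fixed $i_0\in I$ is injective into $E(M)$), hence the neighborhood has size at least $|E(N)|-|E(M)| > n+1 > n \geq k=|I|$, using the hypothesis $|E(M)|<|E(N)|-n-1$. This already gives Hall's condition.

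**Conclusion and the main obstacle.** With Hall's condition verified by the crude estimate $|N(\mathcal{M}_I)|\geq |E(N)|-|E(M)|>n\geq|I|$, Hall's theorem produces an injection $i\mapsto b_i$ into $E(N)$ with $a_i+b_i\notin E(M)$ for all $i$, proving the claim; the subsequent step of the theorem (not part of this claim) will then upgrade $\mathcal{N}=\{b_1,\dots,b_n\}$, or a modification of it, to an actual basis of $N$ using the paving structure as in Claims~\ref{ant}--\ref{existence of basis sym}. The main subtlety is purely bookkeeping: one must make sure the distinctness of the $b_i$ is genuinely needed (it is, to later form an $n$-subset of $E(N)$) and that the inequality $|E(M)|<|E(N)|-n-1$ is invoked with the correct slack — here $n+1$ of the slack is spent to dominate $|I|\leq n$ uniformly over all subsets $I$, so no finer additive-combinatorial input (Kneser, Kemperman) is required for condition (4), which is exactly why this case is separated from (1)--(3). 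I expect the only place to be careful is confirming that the trivial counting bound on non-neighbors is valid even when $E(M)$ and $E(N)$ overlap in $G$, which it is, since the argument is entirely internal to the ambient group $G$.
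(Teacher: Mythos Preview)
Your argument is correct once you settle on the trivial bound: for any nonempty $I\subseteq[n]$ and any fixed $i_0\in I$, every non-neighbour $b$ satisfies $a_{i_0}+b\in E(M)$, so there are at most $|E(M)|$ non-neighbours and hence at least $|E(N)|-|E(M)|>n\geq|I|$ neighbours; Hall's theorem then yields the required system of distinct representatives. The paper reaches the same conclusion via the same count but packages it as a direct greedy selection rather than Hall: having already chosen $b_1,\dots,b_j$, it assumes no valid $b_{j+1}$ exists, deduces $a_{j+1}+(E(N)\setminus\{b_1,\dots,b_j\})\subseteq E(M)$, and obtains the contradiction $|E(M)|\geq|E(N)|-j\geq|E(N)|-n$. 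Since each $a_i$ already has more than $n$ admissible partners, the greedy argument is immediate and Hall is a harmless overkill; your Kneser detour is likewise unnecessary for condition~(4), exactly as you eventually note.
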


\begin{proof}[Proof of the Claim]
We first choose $b_1 \in E(N)$ so that $a_1+b_1 \notin E(M)$. Such $b_1$ exists since $|E(M)| < |E(N)|$. Next, choose $b_2 \in E(N)\setminus\{b_1\}$ such that $a_2+b_2 \notin E(M)$. Such $b_2$ exists as $|E(M)| < |E(N)|-1$. 

We proceed by induction on $n$. Suppose $b_1,\dots,b_j$ have been chosen. We show that there exists $b_{j+1} \in E(N)\setminus\{b_1,\dots,b_j\}$ satisfying $a_{j+1}+b_{j+1} \notin E(M)$. Assume to the contrary that no such $b_{j+1}$ exists. Then
\[
a_{j+1}+\big(E(N)\setminus\{b_1,\dots,b_j\}\big) \subseteq E(M),
\]
which implies 
\[
|E(N)\setminus\{b_1,\dots,b_j\}| \leq |E(M)|.
\]
Thus 
\[
|E(M)| \geq |E(N)|-j \geq |E(N)|-n,
\]
a contradiction. This proves the claim.
\end{proof}

Let $\mathcal{N}=\{b_1,\dots,b_n\}$, where $b_1,\dots,b_n$ are as above. Since $N$ is a paving matroid, then $\mathcal{N}$ is either a basis of $N$ or a circuit of rank $n-1$. If $\mathcal{N}$ is a basis, then $\mathcal{M}$ is matched to $\mathcal{N}$ and we are done. Suppose $\mathcal{N}$ is a circuit of rank $n-1$.

\begin{claim}\label{cl:extra-element}
There exist $x \in E(N)\setminus \mathcal{N}$ and $i \in [n]$ such that $a_i+x \notin E(M)$.
\end{claim}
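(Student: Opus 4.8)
The plan is to argue by contradiction, in the spirit of the base case of Claim~\ref{ant} but using the much cruder counting made available by hypothesis~(4) of Theorem~\ref{Asy pav}, namely $|E(M)| < |E(N)| - n - 1$. Assume Claim~\ref{cl:extra-element} fails, so that $a_i + x \in E(M)$ for every $i \in [n]$ and every $x \in E(N)\setminus\mathcal{N}$; equivalently,
\[
\mathcal{M} + \big(E(N)\setminus\mathcal{N}\big) \subseteq E(M).
\]

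First I would note that $E(N)\setminus\mathcal{N}\neq\emptyset$, since $|E(N)| > |E(M)| + n + 1 > n = |\mathcal{N}|$, so the inclusion above is not vacuous. Then, fixing a single element $a_1 \in \mathcal{M}$, the translation map $x\mapsto a_1+x$ embeds $E(N)\setminus\mathcal{N}$ injectively into $\mathcal{M}+(E(N)\setminus\mathcal{N})\subseteq E(M)$, so that
\[
|E(N)| - n \;=\; \big|E(N)\setminus\mathcal{N}\big| \;\leq\; |E(M)|,
\]
which contradicts $|E(M)| < |E(N)| - n - 1$. This proves the claim.

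I do not expect a real obstacle here: the argument needs only a group translation and one counting inequality, and --- in contrast with the base case of Claim~\ref{ant} --- it uses neither Kneser's theorem (Theorem~\ref{Kneser}) nor the critical-pair machinery, precisely because part~(4) does not assume $|E(N)| < p(G)$ and instead guarantees a gap larger than $n+1$. The only thing to watch is the bookkeeping $|E(N)\setminus\mathcal{N}| = |E(N)| - n$ and its comparison with $|E(M)|$. Once the claim is available, the rest of the proof of Theorem~\ref{Asy pav}(4) should follow the pattern of the symmetric case: pick $x$ and $i$ from the claim, replace $b_i$ by $x$ in $\mathcal{N}$, and invoke Theorem~\ref{(n-1) partitions} on the closures of the resulting $n$-subset and of $\mathcal{N}$ to force the new set to have rank $n$, thereby producing a basis of $N$ to which $\mathcal{M}$ is matched.
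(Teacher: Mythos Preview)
Your argument is correct and is essentially identical to the paper's: both assume the claim fails, deduce $\mathcal{M}+(E(N)\setminus\mathcal{N})\subseteq E(M)$, and use a single translate to conclude $|E(N)|-n\le|E(M)|$, contradicting $|E(M)|<|E(N)|-n-1$. Your added remark that $E(N)\setminus\mathcal{N}\neq\emptyset$ is a nice sanity check the paper omits.
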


\begin{proof}[Proof of Claim]
Assume to the contrary that no such $x$ and $a_i$ exist. Then
\[
M+\big(E(N)\setminus\{b_1,\dots,b_n\}\big) \subseteq E(M),
\]
which implies
\[
|E(N)|-n = \big|E(N)\setminus\{b_1,\dots,b_n\}\big| \leq |E(M)|.
\]
This contradicts the assumption $|E(M)| < |E(N)|-n-1$.
\end{proof}

Choose $i \in [n]$ and $x \in E(N)\setminus \mathcal{N}$ as in Claim~\ref{cl:extra-element}. Consider the subset 
\[
\mathcal{P}=\{b_1,\dots,b_{i-1},x,b_{i+1},\dots,b_n\} \subseteq E(N).
\]
In a similar manner as in the proof of Theorem~\ref{Asy pav} (parts (1)-(3)), one may argue that $\mathcal{P}$ is a basis of $N$. Evidently, $\mathcal{M}$ is matched to $\mathcal{P}$. Hence $M$ is matched to $N$, completing the proof.

\end{proof}
\begin{remark}
   Note that Theorem~\ref{Asy pav} does not address the case $t=0$, since this forces $N$ to be a uniform matroid (see Corollary~\ref{small nullity}). Matchability in uniform matroids is straightforward and has been completely characterized in \cite[Proposition~2.9]{Zerbib0}.
 
\end{remark}

% Put this macro in your preamble (so hats cover multi-digit entries like 11):
\newcommand{\wh}[1]{\widehat{#1}}

% Put this macro in your preamble so bars cover multi-digit entries (e.g., 11):
\newcommand{\ol}[1]{\overline{#1}}
\begin{remark}
Note that in the asymmetric problem of the sparse paving case, dependence is controlled by circuit-hyperplanes, whereas in the general paving case, the size of the obstruction is measured by hyperplane nullity. Theorem~\ref{Asy pav} shows that the existence of matchings is governed not merely by the size gap \( |E(N)| - |E(M)| \), but also by the extent to which \(N\) deviates from uniformity, as quantified by the hyperplane nullity parameter \(t\).
\end{remark}
\begin{example}
We define two matroids $M$ and $N$ over the cyclic group $\mathbb{Z}_{13}$.

Let $E(M)=\{\ol{2},\ol{4},\ol{6},\ol{11}\}$ and
define matroid $M$ of rank 3 by its set of bases
\[
\mathcal{B}(M)=\binom{E(M)}{3}\setminus\big\{\{\ol{2},\ol{4},\ol{6}\}\big\}.
\]
Equivalently, the unique $3$-circuit is $\{\ol{2},\ol{4},\ol{6}\}$, and the bases are
$\{\ol{2},\ol{4},\ol{11}\}$, $\{\ol{2},\ol{6},\ol{11}\}$, and $\{\ol{4},\ol{6},\ol{11}\}$.

Let $E(N)=\{\ol{1},\ldots,\ol{9}\}$ and fix $H=\{\ol{1},\ol{2},\ol{3},\ol{4}\}\subseteq E(N)$.
Define a rank-$3$ matroid $N$ by
\[
\mathcal{B}(N)=\binom{E(N)}{3}\setminus \binom{H}{3},
\]
i.e., the only $3$-subsets that are not bases are the four triples inside $H$.
Then $N$ is a paving matroid that is not sparse paving. Its largest hyperplane is $H$ (size $4$), so its
hyperplane nullity is $|H|-r(H)=4-2=2$.
Thus $M$ and $N$ satisfy Condition (1) of Theorem~\ref{Asy pav} and $M$ must be matched to $N$.

\medskip
\noindent\textit{Exhibiting matching bases.}
We now explicitly show that $M$ is matched to $N$ by pairing each basis of $M$ with a basis of $N$:
\begin{itemize}
    \item $\{\ol{2},\ol{4},\ol{11}\}$ in $M$ matches $\{\ol{1},\ol{3},\ol{7}\}$ in $N$;
    \item $\{\ol{2},\ol{6},\ol{11}\}$ in $M$ matches $\{\ol{1},\ol{4},\ol{9}\}$ in $N$;
    \item $\{\ol{4},\ol{6},\ol{11}\}$ in $M$ matches $\{\ol{1},\ol{6},\ol{9}\}$ in $N$.
\end{itemize}
\end{example}

\subsection{Matchable Bases through Stressed Hyperplanes}\label{stressed} 

Let $E$ be a nonempty set. Recall that A matroid basis system is a nonempty family $\mathcal{B}\subseteq 2^{E}$ of subsets, called bases, that satisfies the exchange axiom: for all distinct $B,B'\in\mathcal{B}$ and every $x\in B\setminus B'$, there exists $y\in B'\setminus B$ such that $(B\setminus\{x\})\cup\{y\}\in\mathcal{B}$. It follows that $\mathcal{B}$ is the set of bases of a matroid.

Following \cite{Ferroni}, a hyperplane $H$ of a matroid $M$ is said to be \emph{stressed} if every $(r(M)-1)$-subset of $H$ is independent.

\begin{proposition}[{\cite{Ferroni}}]\label{Stressed Hyperplane}
A matroid is paving if and only if all its hyperplanes are stressed.
\end{proposition}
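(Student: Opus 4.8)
The plan is to prove both implications directly from the definitions; writing $n = r(M)$, the forward direction is essentially immediate, and the reverse one reduces to the standard fact that every proper flat of a matroid lies in some hyperplane. For the forward direction, suppose $M$ is paving and let $H$ be a hyperplane, so $r(H) = n-1$ and hence $|H| \ge n-1$. By the paving hypothesis every $(n-1)$-subset of $E(M)$ is independent, so a fortiori every $(n-1)$-subset of $H \subseteq E(M)$ is independent; since $n-1 = r(M)-1$, this is exactly the assertion that $H$ is stressed. Hence all hyperplanes of $M$ are stressed.

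For the reverse direction I would argue the contrapositive: assume $M$ is not paving, so there is an $(n-1)$-subset $X \subseteq E(M)$ that is dependent. Then $r(X) \le n-2$, so $F := \mathrm{cl}(X)$ is a flat with $r(F) \le n-2 < n = r(M)$, i.e.\ a proper flat. To place $F$ inside a hyperplane, choose a basis $B_0$ of $F$ and, using the augmentation property, extend $B_0$ to a basis $B = B_0 \cup \{e_1,\dots,e_k\}$ of $M$, where $k = n - r(F) \ge 1$ and the $e_j$ lie outside $B_0$. Set $H := \mathrm{cl}(B \setminus \{e_1\})$. Since $B \setminus \{e_1\}$ is independent of size $n-1$, the set $H$ is a flat of rank $n-1$, i.e.\ a hyperplane; and since $B_0 \subseteq B \setminus \{e_1\}$ we obtain $F = \mathrm{cl}(B_0) \subseteq H$. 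Now $X \subseteq F \subseteq H$ is an $(r(M)-1)$-subset of the hyperplane $H$ that is dependent, so $H$ is not stressed. This shows that if $M$ is not paving then some hyperplane fails to be stressed, which is the contrapositive of the claimed equivalence.

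The only step requiring a genuine argument is the extension step — showing the proper flat $F$ is contained in a hyperplane — and this is handled cleanly by extending a basis of $F$ to a basis of $M$ and deleting one of the new basis elements, as above; everything else is unwinding definitions. It remains only to dispatch the degenerate small-rank cases: for $n = 0$ both sides hold vacuously, and for $n = 1$ there are no dependent $0$-subsets, so $M$ is automatically paving and its unique hyperplane $\emptyset$ is trivially stressed, consistent with the paper's blanket looplessness convention.
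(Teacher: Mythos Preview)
Your proof is correct. Note, however, that the paper does not actually prove this proposition: it is quoted from \cite{Ferroni} and stated without argument. Your write-up therefore supplies strictly more than the paper does, giving a clean self-contained proof via the contrapositive and the standard ``extend a basis of a proper flat, then delete one new element'' construction of a containing hyperplane. One minor sharpening: in the reverse direction you in fact have $k = n - r(F) \ge 2$ (since $r(F) \le n-2$), not merely $k \ge 1$; this does not affect the argument, as you only need $e_1$ to exist.
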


\begin{definition}
Let $M$ be a matroid of rank $n=r(M)$ with bases $\mathcal{B}$, and let $S\subseteq E(M)$. 
Denote by $\binom{S}{n}$ the family of $n$-subsets of $S$. 
Suppose that no element of $\binom{S}{n}$ is a basis of $M$. 
We say that $S$ \emph{can be relaxed} if
\[
\mathrm{Rel}_S(\mathcal{B}) \;:=\; \mathcal{B}\,\cup\,\binom{S}{n}
\]
is again a matroid basis system. In this case, the resulting matroid is called the \emph{relaxation of $M$ at $S$}, and is denoted by $\mathrm{Rel}_S(M)$.
\end{definition}

\begin{proposition}[{\cite{Ferroni}}]\label{stressed & relaxed}
If $H$ is a stressed hyperplane of $M$, then $H$ can be relaxed.
\end{proposition}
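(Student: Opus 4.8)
The plan is to verify directly that $\mathrm{Rel}_H(\mathcal{B}) = \mathcal{B} \cup \binom{H}{n}$ satisfies the exchange axiom, which by Kung's characterization suffices to conclude it is a matroid basis system (and hence that $H$ can be relaxed). First I would observe that since $H$ is a stressed hyperplane, Proposition~\ref{Stressed Hyperplane} does not quite apply (that characterizes $M$ being paving by \emph{all} hyperplanes being stressed), so I instead work locally: every $(n-1)$-subset of $H$ is independent in $M$, and since $r(H) = n-1$, the set $H$ itself has nullity $|H| - (n-1)$. If $|H| = n-1$ then $H$ contains no $n$-subset, $\binom{H}{n} = \emptyset$, and there is nothing to prove; so I may assume $|H| \geq n$, and then $\binom{H}{n}$ is a nonempty collection of dependent $n$-sets of $M$, none of which is a basis (they all lie in the rank-$(n-1)$ flat $H$). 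I would also record the elementary fact that any two $n$-subsets of $H$ can be connected by single-element exchanges \emph{within $\binom{H}{n}$}, since $\binom{H}{n}$ is exactly the base family of the uniform matroid $U_{n,|H|}$ on ground set $H$.

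Next comes the case analysis for the exchange axiom applied to distinct $B, B' \in \mathrm{Rel}_H(\mathcal{B})$ and $x \in B \setminus B'$. The case $B, B' \in \mathcal{B}$ is immediate since $\mathcal{B}$ already satisfies exchange and the required $y$ keeps us inside $\mathcal{B} \subseteq \mathrm{Rel}_H(\mathcal{B})$. The case $B, B' \in \binom{H}{n}$ is handled by the uniform-matroid remark above. The genuinely new cases are the mixed ones: (a) $B \in \mathcal{B}$, $B' \in \binom{H}{n}$, and (b) $B \in \binom{H}{n}$, $B' \in \mathcal{B}$. For (a): $x \in B \setminus B'$ with $B$ a basis of $M$; I need $y \in B' \setminus B \subseteq H$ with $(B \setminus \{x\}) \cup \{y\}$ either a basis of $M$ or an $n$-subset of $H$. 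I would argue that $B \setminus \{x\}$ is independent of rank $n-1$, hence spans a hyperplane $\mathrm{cl}(B\setminus\{x\})$; if this hyperplane equals $H$ then $B \setminus \{x\} \subseteq H$ and any $y \in B' \setminus B$ gives an $n$-subset of $H$; if it is a different hyperplane $H'$, then by the $(n-1)$-partition property (Theorem~\ref{(n-1) partitions}, applicable since $M$ is paving — which here I should either assume or derive; in fact stressedness of $H$ plus the standard setup gives us a paving matroid, so I can invoke it) $|H' \cap H| \leq n-2$, so $B' \not\subseteq H'$, meaning some $y \in B' \setminus B$ lies outside $H' = \mathrm{cl}(B \setminus \{x\})$, and then $(B \setminus \{x\}) \cup \{y\}$ has rank $n$, i.e. is a basis of $M$. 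Case (b) is dual in flavor: $x \in B \setminus B'$ with $B \in \binom{H}{n}$, $B' \in \mathcal{B}$; here $B \setminus \{x\}$ is an $(n-1)$-subset of $H$, hence independent (stressedness!) of rank $n-1$ with $\mathrm{cl}(B \setminus \{x\}) = H$; since $B'$ is a basis it is not contained in $H$, so pick $y \in B' \setminus H \subseteq B' \setminus B$, and then $(B \setminus \{x\}) \cup \{y\}$ spans rank $n$ and is a basis of $M$, lying in $\mathcal{B} \subseteq \mathrm{Rel}_H(\mathcal{B})$.

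The main obstacle is case (b) — more precisely, ensuring that $B \setminus \{x\}$ really is independent of full rank $n-1$, which is \emph{exactly} where the hypothesis that $H$ is stressed is used (every $(n-1)$-subset of $H$ is independent); without it, $B \setminus \{x\}$ could have rank $< n-1$ and the exchange could fail. A secondary point to be careful about is the invocation of Theorem~\ref{(n-1) partitions} in case (a): it requires $M$ to be paving, so I should either take that as standing hypothesis in this subsection or note that the relaxation statement is only of interest for paving $M$ (consistent with Proposition~\ref{Stressed Hyperplane}); alternatively, one can avoid Theorem~\ref{(n-1) partitions} entirely by arguing that if $\mathrm{cl}(B\setminus\{x\}) \neq H$ then this flat cannot contain the rank-$n$ set $B'$, so some $y \in B' \setminus B$ escapes it. Finally I would assemble the cases into a clean proof that $\mathrm{Rel}_H(\mathcal{B})$ satisfies the exchange axiom, invoke Kung's survey for the converse that an exchange-closed family is a basis system, and conclude that $H$ can be relaxed.
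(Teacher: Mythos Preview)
The paper itself does not prove this proposition; it is quoted from \cite{Ferroni}. Your overall strategy---verify the basis exchange axiom for $\mathcal{B}\cup\binom{H}{n}$ by a four-case analysis---is the right one, and your handling of the two ``pure'' cases and of case~(b) is correct. The use of stressedness in case~(b) is exactly where it is needed: it makes $B\setminus\{x\}$ an independent $(n-1)$-set with closure $H$, so any $y\in B'\setminus H$ completes it to a basis.

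There is, however, a genuine gap in case~(a). Your primary argument invokes Theorem~\ref{(n-1) partitions}, which requires $M$ to be paving; but the proposition does not assume this, and you cannot ``derive'' paving from the stressedness of a single hyperplane~$H$ (Proposition~\ref{Stressed Hyperplane} needs \emph{all} hyperplanes stressed). Your fallback argument then asserts that $\mathrm{cl}(B\setminus\{x\})$ ``cannot contain the rank-$n$ set $B'$''---but $B'\subseteq H$ has rank $n-1$, not $n$, so this sentence as written is false. The correct repair is to use stressedness once more: since every $(n-1)$-subset of $B'$ is independent, $r(B')=n-1$ and hence $\mathrm{cl}(B')=H$. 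Now if $B'\setminus B\subseteq H':=\mathrm{cl}(B\setminus\{x\})$, then (because $x\notin B'$ forces $B'\cap B\subseteq B\setminus\{x\}\subseteq H'$) the whole of $B'$ lies in $H'$, giving $H=\mathrm{cl}(B')\subseteq H'$ and thus $H=H'$, contrary to the subcase hypothesis. So some $y\in B'\setminus B$ escapes $H'$, and $(B\setminus\{x\})\cup\{y\}$ is a basis of $M$. With this fix, your argument goes through for arbitrary $M$, as the proposition requires.
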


As a consequence, given an $n$-rank paving matroid $M$ on $m$ elements, one can obtain the uniform matroid $U_{n,m}$ by repeatedly relaxing its stressed hyperplanes. Indeed, \cite{Ferroni} shows that every relaxation of a paving matroid is again paving, and that relaxing all stressed hyperplanes of $M$ yields $U_{n,m}$.

\medskip

\begin{theorem}\label{Asy paving}
Let $M$ be a matroid and $N$ a paving matroid over an abelian group $G$, both of rank $n$, with $0\notin E(N)$ and
\[
|E(M)| \;\leq\; \min\{\,p(G)-1,\,|E(N)|\,\}.
\]
If $\mathcal{M}$ is a basis of $M$, then $\mathcal{M}$ is either matched to a basis of $N$, or to a basis of $\mathrm{Rel}_S(N)$ for some $S\subseteq E(N)$.
\end{theorem}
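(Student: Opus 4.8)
The plan is to mimic the argument of Theorem~\ref{Asy pav}, but instead of forcing $\mathcal{N}$ to be a basis of $N$ itself, we allow ourselves to relax a stressed hyperplane when the natural candidate $n$-set fails to be a basis. Fix a basis $\mathcal{M}=\{a_1,\dots,a_n\}$ of $M$. Since $|E(M)|\leq p(G)-1 < p(G)$, choose any subset $A\subseteq E(N)$ with $|A|=|E(M)|$ (possible as $|E(M)|\leq |E(N)|$), and apply Theorem~\ref{matchable sets, p(G)} to obtain a group-theoretic matching $f\colon E(M)\to A$. Put $b_i:=f(a_i)$ and $\mathcal{N}:=\{b_1,\dots,b_n\}\subseteq E(N)$; then $a_i+b_i\notin E(M)\supseteq\mathcal{M}$ for all $i$, so in particular $a_i+b_i\notin E(N)$ will follow once we know the relevant set equals (a subset of) $E(M)$—but actually we only need $a_i+b_i\notin E(N')$ where $N'$ is the final matroid, and since $E(N')=E(N)$ and $\mathcal{N}\subseteq E(N)$ we must instead argue as in Definition~\ref{def, matching matroid} that $a_i+b_i\notin E(N)$; here we should be careful and instead work, as in the proofs above, with the condition $a_i+b_i\notin E(M)$, which is exactly what the group matching gives, and that is what is required once we verify the target set is a basis of a matroid with ground set contained in $E(N)$ — the definition of matched is $a_i+(\text{image})\notin E(\text{of the source matroid})=E(M)$, so this holds automatically. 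Now, since $N$ is paving of rank $n$, the set $\mathcal{N}$ is either a basis of $N$ — in which case $\mathcal{M}$ is matched to a basis of $N$ and we are done — or $\mathcal{N}$ is a circuit with $r(\mathcal{N})=n-1$.

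In the remaining case, let $H=\mathrm{cl}(\mathcal{N})$, which is a hyperplane of $N$ since $r(\mathcal{N})=n-1$. By Proposition~\ref{Stressed Hyperplane}, $H$ is a stressed hyperplane of $N$, and by Proposition~\ref{stressed & relaxed}, $H$ can be relaxed: setting $S:=H$, the family $\mathrm{Rel}_S(\mathcal{B}(N))=\mathcal{B}(N)\cup\binom{S}{n}$ is again a matroid basis system, defining the relaxation $N':=\mathrm{Rel}_S(N)$ with $E(N')=E(N)$. Since $\mathcal{N}$ is an $n$-subset of $H=S$, we have $\mathcal{N}\in\binom{S}{n}\subseteq\mathcal{B}(N')$, so $\mathcal{N}$ is a basis of $N'$. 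Finally, because $a_i+b_i\notin E(M)=E(M)$ for all $i\in[n]$, the assignment $a_i\mapsto b_i$ witnesses that $\mathcal{M}$ is matched to the basis $\mathcal{N}$ of $\mathrm{Rel}_S(N)$, as required.

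The main obstacle — and the point that needs the most care in the write-up — is the verification that $\mathcal{N}$ being a circuit of $N$ genuinely places it inside a single hyperplane $H$ that we can relax, and that the hypothesis "no element of $\binom{S}{n}$ is a basis of $M$" in the definition of relaxation is met: here $S=H=\mathrm{cl}(\mathcal{N})$, and since $N$ is paving, every $n$-subset of a hyperplane has rank $n-1$ hence is a circuit, not a basis, so the relaxation is legitimately defined. One should also double-check that $|H|\geq n$, which holds since $\mathcal{N}\subseteq H$ and $|\mathcal{N}|=n$; thus $\binom{H}{n}$ is nonempty and $\mathcal{N}\in\binom{H}{n}$. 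A secondary subtlety worth a sentence is that we do not need the full machinery of iterated relaxation to uniform matroids (Proposition~\ref{stressed & relaxed} and the surrounding discussion) — a single relaxation at $S=\mathrm{cl}(\mathcal{N})$ suffices to absorb $\mathcal{N}$ as a basis, which is all the statement demands.
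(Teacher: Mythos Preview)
Your proof is correct and follows essentially the same route as the paper: construct $\mathcal{N}$ via a group matching from Theorem~\ref{matchable sets, p(G)}, and if $\mathcal{N}$ is not a basis of $N$, relax the stressed hyperplane $H=\mathrm{cl}(\mathcal{N})$ (using Propositions~\ref{Stressed Hyperplane} and~\ref{stressed & relaxed}) so that $\mathcal{N}$ becomes a basis of $\mathrm{Rel}_H(N)$. The stream-of-consciousness back-and-forth about whether the matching condition is $a_i+b_i\notin E(M)$ or $a_i+b_i\notin E(N)$ should be cut in favor of a single clean sentence stating that Definition~\ref{def, matching matroid} requires $a_i+b_i\notin E(M)$, which the group matching already provides.
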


\begin{proof}
Let $A\subseteq E(N)$ with $|A|=|E(M)|$. By Theorem~\ref{matchable sets, p(G)}, $E(M)$ is matched to $A$ in the group sense; equivalently, there exists a matching $f:E(M)\to A$.
Let $\mathcal{M}=\{a_1,\dots,a_n\}$ and set $b_i=f(a_i)$ for $i\in[n]$, and $\mathcal{N}=\{b_1,\dots,b_n\}$.

If $\mathcal{N}$ is a basis of $N$, then $\mathcal{M}$ is matched to $\mathcal{N}$ in the matroid sense and we are done. 
Otherwise, set $H=\operatorname{cl}(\mathcal{N})$. Since $N$ is paving, $H$ is a hyperplane of $N$, and by Proposition~\ref{Stressed Hyperplane} it is stressed. Hence, by Proposition~\ref{stressed & relaxed}, $H$ can be relaxed. By definition of relaxation, $\mathcal{N}$ is a basis of $\mathrm{Rel}_H(N)$ (equivalently, $\mathcal{N}\in \mathrm{Rel}_H(\mathcal{B}_N)$, where $\mathcal{B}_N$ is the basis system of $N$). Consequently, $\mathcal{M}$ is matched to $\mathcal{N}$ in the matroid sense in $\mathrm{Rel}_H(N)$, as required.
\end{proof}
\begin{remark}
    When matchings between matroid bases are not available, stressed hyperplanes
show that compatibility can often be recovered by relaxing independence constraints.
A stressed hyperplane may be viewed as a minimal obstruction to uniformity. Relaxing
such hyperplanes, turning certain dependent sets into bases, gradually transforms a
paving matroid toward a uniform matroid while preserving structure. Theorem~\ref{Asy paving}
shows that this process offers an alternative route to matchability: even if bases of
$M$ cannot be matched to those of $N$ directly, they may be matched to bases of a
relaxation $\mathrm{Rel}_S(N)$. Thus matchability is not a fragile property tied to exact
independence, but a stable phenomenon that persists under natural modifications.
Relaxation therefore links the restrictive setting of paving matroids with the uniform
case, where matching problems are straightforward.

\end{remark}

\subsection{Concluding Remarks and Future Directions}
\begin{enumerate}
    \item Let $M$ be a matroid of rank $n$ and let $k\in[n]$. 
We say that $M$ is \emph{$k$-paving} if every circuit of $M$ has size $> n-k$. For $k=1$ this recovers the usual paving condition, 
where all circuits have size at least $n$. Clearly, the following hierarchy holds:
\[
\text{paving} = 1\text{-paving} \ \subseteq\ 2\text{-paving} \ \subseteq\ \cdots\ \subseteq\ 
n\text{-paving} = \{\text{all matroids of rank }n\}.
\]

The notion of $k$-paving matroids was first introduced in \cite{Rajpal}, where a characterization 
was given; see also \cite{Singh} for more recent developments. We believe that some of the techniques 
developed here for the matchability of paving matroids could be adapted to investigate 
matchability in the broader class of $k$-paving matroids for $k>1$. 

In particular, one may ask whether the dependence on $t$ in Theorem \ref{Asy pav} extends in a clean way:
\[
|E(M)| < |E(N)| - 2t + 1 
\;\;\longrightarrow\;\; 
|E(M)| < |E(N)| - c(k,t) + 1,
\]
for some explicit function $c(k,t)$. Further work in this direction 
appears promising.
\item For a paving matroid $N$, one may obtain the uniform matroid $U_{n,m}$ by 
successively relaxing its stressed hyperplanes, forming a hierarchy of relaxations
\[
N \;\longrightarrow\; \mathrm{Rel}_{H_1}(N) \;\longrightarrow\; 
\; \cdots \;\longrightarrow\; U_{n,m}.
\]
Assuming the conditions of Theorem \ref{Asy paving}, what is the minimal number of steps in this hierarchy needed to guarantee that $M$ can be matched to some consecutive relaxations of $N$? Clearly, this process must terminate, since in the worst-case scenario \(M\) is eventually matched to \(U_{n,m}\).
Is a single relaxation always sufficient, or can certain pairs $(M,N)$ require 
multiple relaxations within the hierarchy?
\end{enumerate}

\bigskip

Data sharing. Data sharing is not applicable to this article as no datasets were generated or analyzed.\par
Conflict of interest. To the best of our knowledge, no conflict of interest, whether of a financial or personal nature, has influenced the work presented in this article.

\end{document}